\documentclass{amsart}

\usepackage{enumerate}
\usepackage{amssymb}
\usepackage{amsmath,amscd}
\usepackage{amsthm}
\usepackage{graphicx}
\usepackage{pictexwd,dcpic}

\newcommand{\R}{\mathbb{R}}
\newcommand{\Z}{\mathbb{Z}}
\newcommand{\OO}{\mathrm{O}}
\newcommand{\F}{\mathcal{F}}
\newcommand{\srf}{singular Riemannian foliation}
\newcommand{\In}{\subset}
\newcommand{\codim}{\mathrm{codim}}
\newcommand{\lift}{G}

\newtheorem{theorem}{Theorem}

\newtheorem{corollary}[theorem]{Corollary}
\newtheorem{lemma}[theorem]{Lemma}
\newtheorem{proposition}[theorem]{Proposition}

\newtheorem{maintheorem}{Theorem}

\theoremstyle{definition}
\newtheorem{definition}[theorem]{Definition}

\theoremstyle{remark}
\newtheorem{remark}[theorem]{Remark}

\title{A Slice Theorem for singular Riemannian foliations, with applications}
\date{\today}
\author{Ricardo A. E. Mendes, Marco Radeschi}

\begin{document}

\maketitle
\begin{abstract}
We prove a Slice Theorem around closed leaves in a singular Riemannian foliation, and we use it to study the $C^\infty$-algebra of smooth basic functions, generalizing to the 
inhomogeneous setting a number of results by G.~Schwarz. In particular, in the infinitesimal case we show that this algebra is generated by a finite number of polynomials.
\end{abstract}

\section{Introduction}

We consider certain partitions $\F$ of Riemannian manifolds $M$ called \emph{\srf s}. These are partitions into smooth connected equidistant submanifolds of possibly varying dimension called \emph{leaves} (see Section \ref{S:preliminaries} for the precise definition and some basic facts). 

Singular Riemannian foliations generalize several classes of objects that have been traditionally studied in Riemannian Geometry. One example is the decomposition into orbits under an action of a connected group by isometries, which we will also refer to as a \emph{homogeneous} \srf, studied in the theory of Differentiable Transformation Groups (see for example \cite{Bredon}). Another class is that of Isoparametric Foliations, whose study goes back to Levi-Civita and \'E.~Cartan in the 1930's, but whose origins can be traced back even further to the 1910's. Nevertheless, it remains an important object of current research, see \cite{Thorbergsson} for a survey. Finally, one of course has (regular) Riemannian foliations, which date back to the 1950's \cite{Reinhart59,Haefliger58}. Certain \srf s arise naturally in this context, by taking leaf closures of an arbitrary regular Riemannian foliation, see \cite{Molino}.

An important special case is when the \srf\ is  \emph{infinitesimal}. This means that the ambient Riemannian manifold $M$  is a Euclidean vector space $V$, and the origin is a leaf (cf. Section \ref{S:preliminaries}). Infinitesimal foliations  generalize the orbit decompositions of orthogonal representations, and this is a true generalization, because there exist many \emph{inhomogeneous} infinitesimal foliations. In fact, there is a construction that associates to each infinitesimal foliation an infinite family of (higher dimensional) inhomogeneous infinitesimal foliations. These  are called \emph{composed} foliations, they are related to Clifford systems, and include classical examples such as the octonionic Hopf fibration of $\R^{16}$, and the isoparametric foliations in \cite{FerusKarcherMunzner81} (see \cite{Radeschi14,GorodskiRadeschi15}).  

The celebrated Slice Theorem, first proved by Kozsul \cite{Koszul53}, is a fundamental tool in the Theory of Transformation Groups. Given an action of a Lie group $G$ on a manifold $M$, it describes a tubular neighbourhood around any orbit $G\cdot p$ in terms of a linear representation, namely the \emph{slice representation} of the isotropy group $G_p$ on the normal space to the orbit at $p$. Analogously, given a \srf\ $(M,\F)$ and a point $p\in M$, the normal space at $p$ of the leaf through $p$ carries a natural infinitesimal foliation $\F^p_0$, called the \emph{slice foliation}, cf. Section \ref{S:preliminaries}. Our main result is a Slice Theorem for \srf s:
\begin{maintheorem}[Slice Theorem]
\label{MT:slice}
Let $(M,\F)$ be a singular Riemannian foliation, and let $L$ be a closed leaf with slice foliation $(V,\F^p_0)$ at a point $p\in L$. Then there is a group $K\In \OO(V)$ of foliated isometries of $(V,\F^p_0)$ and a principal $K$-bundle $P$ over $L$, such that for small enough $\epsilon>0$, the $\epsilon$-tube $U$ around $L$ is foliated diffeomorphic to $(P\times_K V^\epsilon,P\times_K\F_0^p)$.
\end{maintheorem}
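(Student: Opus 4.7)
The plan is to follow closely the strategy of the classical Slice Theorem for isometric group actions. Since $L$ is closed, for $\epsilon>0$ small enough the normal exponential map $\exp^\perp$ is a diffeomorphism from the open $\epsilon$-disk subbundle of $\nu L$ onto the $\epsilon$-tube $U$ around $L$. By the very definition of the infinitesimal (slice) foliation, restricting $\exp^\perp$ to a single fiber $\nu_qL$ pulls $\F|_U$ back to the $\epsilon$-ball of $\F^q_0$. Thus the foliated space $(U,\F|_U)$ is completely encoded by the smooth family of pointed infinitesimal foliations $\{(\nu_qL,\F^q_0)\}_{q\in L}$, and the problem reduces to organizing this family into an associated bundle.

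The core technical step, and the one I expect to be the main obstacle, is to show that any two slice foliations along $L$ are related by a linear foliated isometry. Given $p,q\in L$ joined by a smooth curve $\gamma\In L$, I would argue that the Levi-Civita parallel transport along $\gamma$ carries $\F^p_0$ to $\F^q_0$. The idea is to extend $\dot\gamma$ to a smooth vertical vector field $X$ on $M$; its local flow $\varphi_t$ is then a foliated diffeomorphism of $(M,\F)$ taking $p$ to $\gamma(t)$. Composing $d_p\varphi_t$ with the orthogonal projection $T_{\gamma(t)}M\to\nu_{\gamma(t)}L$ yields a smooth family of linear foliated isomorphisms $\nu_pL\to\nu_{\gamma(t)}L$; to upgrade these to isometries one exploits that flows of vertical vector fields preserve the transverse metric, together with the equidistance of leaves met by horizontal geodesics in the normal exponential image.

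Given this step, define
\[
K=\{g\in\OO(V)\mid g_*\F^p_0=\F^p_0\},
\]
\[
P_q=\{\phi\colon V\to\nu_qL\mid\phi\text{ a linear isometry with }\phi_*\F^p_0=\F^q_0\},
\]
and let $P=\bigsqcup_{q\in L}P_q$ with the obvious projection to $L$ and right $K$-action $\phi\cdot g=\phi\circ g$. By the previous step each $P_q$ is a $K$-torsor, and local sections are produced by parallel transporting a chosen $\phi_p\in P_p$ along radial geodesics in $L$ emanating from $p$. This equips $P\to L$ with the structure of a smooth principal $K$-bundle.

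Finally, the associated bundle map
\[
\Psi\colon P\times_KV^\epsilon\longrightarrow U,\qquad[\phi,v]\longmapsto\exp^\perp(\phi(v))
\]
is well-defined by $K$-invariance of the formula, smooth, and fiberwise over $L$ a diffeomorphism, hence globally a diffeomorphism. Since each $\phi\in P_q$ identifies $\F^p_0$ with $\F^q_0$ and $\exp^\perp$ identifies $\F^q_0$ with $\F|_U$ fiberwise, $\Psi$ sends $P\times_K\F^p_0$ to $\F|_U$, yielding the desired foliated diffeomorphism.
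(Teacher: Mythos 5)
Your high-level skeleton matches the paper's: reduce to the normal bundle via $\exp^\perp$, show that normal fibers at different points of $L$ are related by linear foliated isometries, assemble a principal $K$-bundle of such identifications, and package everything as an associated bundle. However, the central technical claim is asserted rather than proved, and the justifications you offer for it do not hold up.

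First, the remark that Levi-Civita parallel transport in $\nu L$ carries $\F^p_0$ to $\F^q_0$ is false in general; normal parallel transport has no reason to intertwine slice foliations. You appear to sense this and switch to a second construction, namely $d_p\varphi_t$ (for $\varphi_t$ the flow of a vertical extension $X$ of $\dot\gamma$) composed with orthogonal projection to $\nu_{\gamma(t)}L$. This second construction is in fact correct — it coincides with the flow $\Phi^t|_{\nu_pL}$ of the \emph{linearization} $X^\ell$ of $X$ around $L$ — but you never identify it as such, and that identification is precisely what makes the claim provable. The paper's Proposition~\ref{P:flow} establishes the two properties you need: (a) $\Phi^t$ preserves the leaves of $\nu\F$, which follows because $X^\ell$ is again tangent to the leaves by the Homothetic Transformation Lemma and a limit argument, and (b) $\Phi^t|_{\nu_pL}$ is orthogonal because it is linear and leaves lie on distance spheres around $L$. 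Your proposed justification — that flows of vertical fields ``preserve the transverse metric'' — is not a meaningful statement across singular strata and, even in the regular part, does not directly yield that the projected differential is an isometry of normal fibers, let alone that it is foliated. The ``equidistance of leaves'' remark points in the right direction for (b), but (a) is simply missing, and (a) is the real content.

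Second, producing local sections of $P$ by ``parallel transporting along radial geodesics from $p$'' suffers from the singularity of the radial field at $p$ and from the same ambiguity about what ``parallel transport'' means. The paper sidesteps this (Lemma~\ref{L:structure}) by fixing coordinates on a small ball in $L$, extending each coordinate vector field to a vertical field, linearizing, and composing the resulting flows $\Phi_n^{x_n}\circ\cdots\circ\Phi_1^{x_1}$; smoothness in the base point is then automatic. In short: you found the right map, but the machinery of linearized vector fields — Proposition~\ref{P:linearization}, Proposition~\ref{P:flow}, and Corollary~\ref{C:equifocality} — is exactly what is needed to prove it does what you claim, and that machinery is absent from your argument.
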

The group $K$ above consists of all transformations which send each leaf of the disconnected slice foliation $\F^p$ to itself, see section \ref{S:slice} for more details.

A previous version of the Slice Theorem \cite[Theorem 6.1]{Molino}  describes a neighbourhood of a \emph{plaque} (cf. Section \ref{S:preliminaries}). Since a plaque is a neighbourhood of a point inside a leaf, this can be seen as a local version of our  Theorem \ref{MT:slice}. The main ingredient in our proof of Theorem \ref{MT:slice} are linearized vector fields \cite{Molino}, which allow us to reduce the structure of the normal bundle of the leaf to $K$ (see Lemma \ref{L:structure}).

In the second part of this paper we apply Theorem \ref{MT:slice} to the study of smooth \emph{basic functions}, that is, smooth functions on a manifold $M$ that are constant on the leaves of a \srf\ $\F$.

Start with an infinitesimal foliation $(V,\F)$. When the leaves are compact, they are given as the common level sets of a finite number of basic \emph{polynomials}. This is called the Algebraicity Theorem \cite{LytchakRadeschi15}, see Theorem \ref{T:algebraicity}. Generalizing the main result of \cite{Schwarz75} about representations of compact Lie groups, we show:
\begin{maintheorem}
\label{MT:sphere}
Let $\F$ be an infinitesimal \srf\ of the Euclidean space $V$ with compact leaves, and $\rho_1,\ldots,\rho_k$ generators for the algebra of basic polynomials. Then the image of the pull-back map $\rho^*:C^\infty(\R^k)\to C^\infty(V)$ equals the space $C^\infty(V)^\F$ of all smooth basic functions.
\end{maintheorem}

The proof of Theorem \ref{MT:sphere} uses the averaging operator of \cite{LytchakRadeschi15} (cf. Section \ref{S:preliminaries} and Lemma \ref{L:averaging}), together with a result about composite differentiable functions due to Tougeron \cite{Tougeron80} and later generalized by Bierstone-Milman \cite{BierstoneMilman82}.

Next consider a \srf\ $(M,\F)$ of a complete manifold $M$. As an application of Theorems \ref{MT:slice} and \ref{MT:sphere}, we have:
\begin{maintheorem}
\label{MT:manifold}
Let $(M,\F)$ be a \srf\ with closed leaves in the complete manifold $M$. Assume the (disconnected) slice foliations of $M$ fall into a finite number of isomorphism types. Then there are smooth basic functions $f_1, \ldots f_N$
\footnote{See section \ref{S:basicfunctions} for an explicit upper bound for $N$.}
 which generate $C^\infty(M)^\F$ as a $C^\infty$-algebra.
 \end{maintheorem}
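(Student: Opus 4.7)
My plan is to reduce the problem locally to the infinitesimal setting via the Slice Theorem, apply Theorem~\ref{MT:sphere} on each slice, and then patch the local data together via a basic partition of unity, with the finiteness of slice isomorphism types being precisely what makes the resulting list of generators finite.

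First, fix representatives $(V_i,\F_i)$, $i=1,\ldots,m$, of the isomorphism types of slice foliations that occur in $M$, and for each one choose polynomial generators $\rho_1^{(i)},\ldots,\rho_{k_i}^{(i)}$ of its basic algebra, provided by the Algebraicity Theorem. By the Slice Theorem proved earlier in the paper, around every leaf $L\subset M$ there is a saturated tubular neighborhood $U_L$ foliated as a bundle whose typical fiber foliation is the slice at $L$, of some type $i(L)$. Under this identification, smooth basic functions on $U_L$ correspond to smooth basic functions on the slice, and by Theorem~\ref{MT:sphere} these are smooth functions of the $\rho_j^{(i(L))}$'s.

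Second, choose a locally finite saturated cover $\{U_\alpha\}$ of $M$ by such tubes together with a subordinate basic partition of unity $\{\phi_\alpha\}$; such partitions exist since one can average ordinary partitions of unity over the (compact) leaf closures. I would then build the candidate global generators $f_1,\ldots,f_N$ from: (a) for each slice type $i$, the basic functions on $M$ obtained by assembling the local extensions of the $\rho_j^{(i)}$ over all tubes of type $i$, cut off to zero via the $\phi_\alpha$'s, and (b) a few auxiliary basic functions recording the slice-type stratification. The total count depends only on $m$ and the $k_i$, which gives the bound in the footnote.

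The hardest step will be to show that this finite list actually generates $C^\infty(M)^\F$ \emph{as a $C^\infty$-algebra}: any $g\in C^\infty(M)^\F$ must be expressible as $h(f_1,\ldots,f_N)$ for a single $h\in C^\infty(\R^N)$. Locally, Theorem~\ref{MT:sphere} yields such an expression on each tube, so the task is to splice them into a single smooth function on $\R^N$. I expect this to require a Whitney-type extension argument on the image of $F=(f_1,\ldots,f_N):M\to\R^N$, using its semialgebraic structure on each tube (from the Algebraicity Theorem) together with the cut-off functions to separate the different slice-type strata. The principal obstacle will be ensuring smoothness of the global extension across the transitions between different slice types, where one must simultaneously respect the multiple semialgebraic descriptions coming from the different $\rho^{(i)}$.
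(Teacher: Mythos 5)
Your overall framework (local reduction via the Slice Theorem, applying Theorem~\ref{MT:sphere} on slices, and patching with a basic partition of unity) matches the paper's strategy, but there is a real gap at exactly the step you flag as hardest, and the repair is not a Whitney-extension argument. The paper's key idea, which your outline misses, is to invoke Ostrand's theorem from dimension theory to organize the cover of $M/\F$ by balls $B(x,r(x))$ into at most $1+\dim(M/\F)$ \emph{discrete} families, then subdivide these by slice type into finitely many discrete families $\mathcal{U}_1,\ldots,\mathcal{U}_m$ with $m\le l(1+\dim(M/\F))$. Discreteness (not mere local finiteness) is essential: with a locally finite cover as in your proposal, there are potentially infinitely many tubes of a given slice type, and they may overlap, so ``assembling the local extensions of the $\rho_j^{(i)}$ over all tubes of type $i$'' into finitely many global basic functions is not well defined, and there is no obvious way to splice the infinitely many local expressions $Q_\alpha$ into a single $h\in C^\infty(\R^N)$.

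Concretely, once the $W_j\in\mathcal{U}_i$ are pairwise disjoint and discrete, one builds a single separator function $f$ constant equal to $j$ on $W_j$ (Lemma~\ref{L:partitionof1}(b)), and basic functions $\tilde\rho_1,\ldots,\tilde\rho_k$ that restrict on each $W_j$ to the chosen generators of the fixed slice type. For $g\in C^\infty(U_i)^\F$ one then has $g|_{W_j}=Q_j(\tilde\rho_1,\ldots,\tilde\rho_k)$, and because the $W_j$ are discrete one can find a \emph{single} $Q\in C^\infty(\R^{k+1})$ with $Q|_{\{j\}\times\R^k}=Q_j$, so $g=Q(f,\tilde\rho_1,\ldots,\tilde\rho_k)$; the extra coordinate provided by $f$ is exactly what absorbs the $j$-dependence of the local expressions, replacing the Whitney-type splicing you anticipate. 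Your item (b), ``auxiliary basic functions recording the slice-type stratification,'' is in the right spirit but records the wrong thing: what must be separated is not the slice type but the individual tubes within one discrete family, since even tubes of identical slice type yield incompatible local $Q_j$'s. Two smaller issues: your construction of a basic partition of unity by ``averaging over the compact leaf closures'' assumes compact leaves, but the hypothesis is only that leaves are closed; the paper instead builds bump functions from the smooth function $d(x,\cdot)^2$ near each point, using Proposition~\ref{P:tube}. And your claim that the Slice Theorem gives ``a saturated tubular neighborhood $U_L$'' at fixed radius around every leaf simultaneously requires exactly that proposition.
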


Theorem \ref{MT:manifold} was established in the homogeneous case in \cite{Schwarz75}, using the Equivariant Embedding Theorem \cite{Mostow57,Palais57}. In contrast, we use a result from Dimension Theory due to Ostrand \cite{Ostrand65}. We also observe that the literal generalization of the Equivariant Embedding Theorem to inhomogeneous foliations cannot hold, because not every (compact) manifold is a leaf of some \srf\ of some \emph{simply-connected} compact ambient manifold. Indeed, the fundamental group of such a leaf needs to be \emph{virtually Abelian} (Proposition \ref{P:virtuallyabelian}). This is proved as a simple application of \cite[Theorem A]{GGR15}, together with Theorem \ref{MT:slice}.

As another application of Theorem \ref{MT:sphere}, we have:
\begin{maintheorem}[Inverse Function Theorem]
\label{MT:inverse}
Let $(M_i,\F_i)$, $i=1,2$ be singular Riemannian foliations with closed leaves, and let $f:M_1/\F_1\to M_2/\F_2$ be a smooth strata preserving map such that $d_xf$ is an isomorphism for some $x\in M_1/\F_1$. Then $f$ is a diffeomorphism in a neighbourhood of $x$.
\end{maintheorem}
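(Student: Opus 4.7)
The plan is to reduce the statement to the classical inverse function theorem between Euclidean spaces, using the Slice Theorem together with Theorem~\ref{MT:sphere} to transfer the problem.

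\emph{Step 1 (Localization).} Let $\pi_i : M_i \to M_i/\F_i$ be the quotient maps, and choose preimages $\tilde x \in \pi_1^{-1}(x)$ and $\tilde y \in \pi_2^{-1}(f(x))$. Writing $(V_i, \F_i^{\mathrm{sl}})$ for the infinitesimal slice foliations of $\F_i$ at $\tilde x$ and $\tilde y$, the Slice Theorem proved earlier in the paper identifies a neighbourhood of $x$ (resp.\ $f(x)$) in $M_i/\F_i$, as a stratified smooth space, with a neighbourhood of $[0]$ in $V_i/\F_i^{\mathrm{sl}}$. So I may assume throughout that each $(M_i, \F_i) = (V_i, \F_i)$ is an infinitesimal \srf\ with $x = f(x) = [0]$.

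\emph{Step 2 (Lifting $f$ to Euclidean space).} Pick basic polynomial generators $\rho_i = (\rho_{i,1}, \ldots, \rho_{i,k_i}) : V_i \to \R^{k_i}$. By \cite{LytchakRadeschi15} these induce topological embeddings $\bar\rho_i : V_i/\F_i \hookrightarrow \R^{k_i}$ onto semi-algebraic sets $X_i$, and by Theorem~\ref{MT:sphere} the pullback $\rho_i^*$ surjects onto the smooth basic functions. Applied to each $\rho_{2,j} \circ f \circ \pi_1 \in C^\infty(V_1)^{\F_1}$, this produces $F_j \in C^\infty(\R^{k_1})$ with $F_j \circ \rho_1 = \rho_{2,j} \circ f \circ \pi_1$. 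Assembling, the smooth map $F = (F_1, \ldots, F_{k_2}) : \R^{k_1} \to \R^{k_2}$ lifts $f$ in the sense that $F|_{X_1} = \bar\rho_2 \circ f \circ \bar\rho_1^{-1}$.

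\emph{Step 3 (Inversion).} Choosing the $\rho_{i,j}$ to be a minimal set of generators, the Zariski tangent spaces $T_0 X_i \subseteq \R^{k_i}$ are naturally identified with the intrinsic tangent spaces $T_x(V_i/\F_i)$ of the leaf spaces, and under these identifications $d_x f$ corresponds to the restriction $d_0 F|_{T_0 X_1}$. The hypothesis that $d_x f$ is an isomorphism therefore implies, after possibly padding $F$ with dummy linear coordinates to equalize dimensions, that $F$ is a smooth map between open subsets of Euclidean spaces of equal dimension whose differential at $0$ is invertible. The classical inverse function theorem then produces a smooth local inverse $G$.

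The main obstacle is to verify that $G$ actually restricts to a map $X_2 \to X_1$ near $0$, so that it descends to a smooth inverse of $f$ on leaf spaces. For this I would use the strata-preserving hypothesis decisively: on each stratum, $f$ is a smooth map between manifolds whose differential (restricted to the tangent spaces of matching strata) is an isomorphism, hence a local diffeomorphism of strata. Using the algebraicity of the $X_i$ together with these stratumwise local inverses, and arguing by induction on the depth of the stratification, one pieces the pieces together to conclude that $G(X_2) \subseteq X_1$ locally, giving the desired two-sided inverse of $f$ on leaf spaces.
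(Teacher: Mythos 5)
Your Steps 1--3 match the paper's strategy almost exactly: reduce to the infinitesimal case via the Slice Theorem, lift $f$ to a smooth $\phi:\R^{l_1}\to\R^{l_2}$ using Theorem~\ref{MT:sphere} (this is Lemma~\ref{L:differential}), identify $T_x(V_i/\F_i)$ with $\R^{l_i}$ via minimal generators (Lemma~\ref{L:tg-space}), and apply the classical Inverse Function Theorem to $\phi$. (One small wrinkle: the ``padding with dummy coordinates'' is unnecessary and slightly off --- once minimal generators are chosen, $d_x f$ being an isomorphism forces $l_1=l_2$ automatically via the tangent-space identifications, so $\phi$ is already a map between spaces of equal dimension.)

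The genuine gap is in your Step~4, which is exactly where the real content of the theorem sits. You correctly identify that the hard part is showing the local inverse $G$ of $\phi$ restricts to a map $X_2\to X_1$, but the argument you offer is not a proof: the claim that ``on each stratum, $f$ is a smooth map between manifolds whose differential \dots\ is an isomorphism'' is asserted, not established --- $d_x f$ is hypothesized to be an isomorphism only at the single basepoint $x$, and to get isomorphisms stratum-by-stratum one first needs to know that corresponding strata of $X_1$ and $X_2$ have matching dimensions, which itself requires argument. And ``arguing by induction on the depth of the stratification, one pieces the pieces together'' is a hope rather than a mechanism. The paper's actual argument at this point is shorter and concrete, and hinges on a fact you do not invoke: the principal stratum of a leaf space is \emph{open, dense, and connected}. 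Since $\phi$ restricted to $S_1$ is injective and strata-preserving, $\phi(S_1^{pr})$ is open and closed in $S_2^{pr}$; after shrinking $U_2$ so that $S_2^{pr}$ is connected (e.g.\ a metric ball around $0$), this forces $\phi(S_1^{pr})=S_2^{pr}$; taking closures in $U_i$ and using density of the principal stratum then yields $\phi(S_1)=S_2$, so that $(\phi|_{S_1})^{-1}$ exists and $f^{-1}=(\hat\psi^1)^{-1}\circ(\phi|_{S_1})^{-1}\circ\hat\psi^2$ is smooth. Without the connectedness/density input your sketch does not close.
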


The present article is organized as follows. In section \ref{S:preliminaries} we collect definitions and some basic facts  about \srf s, and fix notations. In section \ref{S:slice} we define disconnected infinitesimal foliations and prove Theorem \ref{MT:slice}, the Slice Theorem. Section \ref{S:basicfunctions} concerns smooth basic functions and is devoted to proving Theorems  \ref{MT:sphere} and \ref{MT:manifold}. In section \ref{S:maps} we study smooth maps between leaf spaces, and prove Theorem \ref{MT:inverse}, the Inverse Function Theorem.




\subsection*{Acknowledgements} We would like to thank Marcos Alexandrino for discussions that led to Theorem \ref{MT:slice}, Alexander Lytchak for several conversations on Theorem \ref{MT:sphere}, and John Harvey for pointing out Ostrand's Theorem to us. We also thank the referees, whose input helped improve the presentation, and simplify the proof of Theorem \ref{MT:sphere}.

\section{Preliminaries}
\label{S:preliminaries}

We collect in this section the definitions and basic facts that will be used in the rest of this paper.

\begin{definition}
Let $M$ be a smooth manifold.
\begin{itemize}
\item
 A \emph{generalized distribution} is an assignment $x\mapsto \mathcal{D}_x$ of a linear subspace $\mathcal{D}_x\subset T_xM$ to each $x\in M$. It is \emph{smooth} if  every $X_0\in \mathcal{D}_x$ extends to a smooth vector field $X$ on $M$, such that $X(y)\in \mathcal{D}_y $ for every $y\in M$.
\item
A smooth generalized distribution $\mathcal{D}$ is \emph{integrable} if, for every $x\in M$, there is a connected, smooth, immersed submanifold $L\subset M$  containing $x$, such that $T_yL=\mathcal{D}_y$ for all $y\in L$. Such a submanifold $L$ is called an \emph{integral manifold} of $\mathcal{D}$.
\item
A \emph{singular foliation} is the partition of $M$ into the maximal integral manifolds (called \emph{leaves}) of a smooth integrable distribution.
\end{itemize}
\end{definition}

The celebrated Stefan-Sussmann Theorem \cite{Stefan74,Sussmann73} gives a sufficient and necessary condition for a smooth generalized distribution to be integrable.

Following \cite[page 189]{Molino}, we define the main objects of the present paper by combining singular foliations  with the notion of a transnormal system due to Bolton \cite{Bolton73}:
\begin{definition}
A \emph{\srf} of a Riemannian manifold $M$ is a singular foliation $\F$ such that every geodesic that is normal to a leaf is normal to every other leaf it meets.
\end{definition}
We will also use the notation $(M,\F)$ when we want to emphasize the ambient manifold $M$. The tangent space to the leaf $L_p$ through $p$ will be called the \emph{vertical} space at $p$, and its orthogonal complement the \emph{horizontal} space. Vector fields tangent to all leaves will be called \emph{vertical} vector fields.
The orbit decomposition under the isometric action of a connected Lie group forms a \srf, and these will be referred to as \emph{homogeneous} \srf s.

Any two leaves $L_1, L_2$ of a \srf\ $\F$ are \emph{equidistant} in the sense that $d(L_1,q)$ is independent of $q\in L_2$. In fact, a singular foliation is Riemannian if and only if leaves are \emph{locally} equidistant . To make this precise and also to state Molino's Homothetic Transformation Lemma, we need the following definitions from \cite[page 192]{Molino}:
\begin{definition}
Let $M$ be a Riemannian manifold, and $\F$ a singular foliation of $M$. A \emph{plaque} is an open subset $P\subset L$ of a leaf $L$, such that the inclusion of $P$ in $M$ is an embedding. Denote by $\nu P$ the normal bundle of $P$, and by $\exp:\nu P\to M$ the normal exponential map. If $\epsilon>0$ is such that $\exp$ restricted to $\{v\in \nu P\ |\ \|v\|<\epsilon\}$ is a diffeomorphism onto its image $O_\epsilon(P)$, then $O_\epsilon(P)$ is called a \emph{distinguished tubular neighbourhood} of $P$. Denote by $\F_O$ the partition of $O_\epsilon$ into the connected components of the intersections of $O_\epsilon$ with the leaves of $\F$.
\end{definition}
In the notations above, a singular foliation $\F$ is Riemannian if and only if every $p\in M$ is contained in a distinguished tubular neighbourhood $O_\epsilon$, such that the leaves of $\F_O$ are equidistant (see \cite[page 193]{Molino}). 

\begin{lemma}[Homothetic Transformation Lemma \cite{Molino}, page 193]
\label{L:homothetic}
 Let $\F$ be a \srf\ of a Riemannian manifold $M$, and fix a distinguished tubular neighbourhood $O_\epsilon$. Then, for any $\lambda>1$, the homothetic transformation $h_\lambda:O_{\epsilon/\lambda}\to O_\epsilon$ given by $\exp(x)\mapsto \exp(\lambda x)$ takes leaves of $\F_O$ to leaves of $\F_O$. 
\end{lemma}

We will need the following fact, see Proposition 4.3 in \cite{LytchakThorbergsson10}, or Theorem 2.9 in \cite{Alexandrino10} for a proof.
\begin{proposition}[Equifocality]
\label{P:equifocality}
Let $\F$ be a \srf\ of a Riemannian manifold $M$, and let $L$ be a leaf. If $v,w\in\nu L$ are two normal vectors (at possibly different points) such that $\exp(tv)$ and $\exp(tw)$ belong to the same leaf for all small $t>0$, then they belong to the same leaf for all $t\in \R$ for which $\exp(tv)$ and $\exp(tw)$ are defined.
\end{proposition}

\begin{definition}
An \emph{infinitesimal foliation} is a \srf\ of a Euclidean vector space $V$, such that the origin is a leaf.
\end{definition}
Given an infinitesimal foliation $(V,\F)$, the unit sphere $S(V)$ centered at the origin is a union of leaves (by equidistance from the origin), and the resulting partition $\F|_{S(V)}$ of $S(V)$ is in fact a \srf. Moreover, by the Homothetic Transformation Lemma,  $\F|_{S(V)}$ completely determines $\F$. This construction defines a  one-one correspondence between infinitesimal foliations and \srf s of (round) spheres.

Infinitesimal foliations appear naturally: Given a \srf\  $(M,\F)$, let $L$ be the leaf through $p\in M$, and define the \emph{slice} at $p$ by $V=\nu_pL$, with its natural Euclidean metric. For small $\epsilon>0$, let $V^\epsilon=\{v\in V\ |\ \|v\|<\epsilon\}$, and consider the partition $\F^p$ of $V^\epsilon$ defined by intersecting the leaves of $\F$ with $\exp_p(V^\epsilon)\simeq V^\epsilon$. Define the (connected) \emph{slice foliation} $\F^p_0$ by taking connected components of  $\F^p$,  and then extending this foliation by homotheties to all of $V$. This is well-defined by the Homothetic Transformation Lemma, and $(V,\F^p_0)$ is an infinitesimal foliation by \cite[Thm 6.1 and Prop. 6.5]{Molino}. When $\F$ is homogeneous, the slice foliation $(V,\F^p_0)$ coincides with the (orbit decomposition under the) slice representation of the \emph{connected component} of the isotropy group at $p$.

A recently established \cite{LytchakRadeschi15} fundamental property of infinitesimal foliations with compact leaves is their \emph{algebraic} nature. More precisely:
\begin{theorem}[Algebraicity]
\label{T:algebraicity}
Let $(V,\F)$ be an infinitesimal foliation with compact leaves. Denote by $\R[V]^\F$ the algebra of \emph{basic polynomials}, that is, polynomials on $V$ that are constant on  the leaves of $\F$.  Then $\R[V]^\F$ is finitely generated and separates leaves. In particular, given a set of generators $\rho_1,\ldots \rho_k$, any leaf is the inverse image of a point under the map $\rho=(\rho_1,\ldots, \rho_k):V\to\R^k$. Moreover, the map $\rho$ induces a homeomorphism between the leaf space $V/\F$ and a semi-algebraic subset of $\R^k$.
\end{theorem}
The Algebraicity Theorem reduces to Hilbert's Theorem in the homogeneous case, and to \cite[Satz 2]{Muenzner80} and \cite[Theorem D]{Terng85} in the isoparametric case (see also \cite{Thorbergsson91}).

Key in the proof of the Algebraicity Theorem is the \emph{averaging operator} $\mathrm{Av}$. Given $f\in C^\infty(V)$, one defines the value of $\mathrm{Av}(f):V\to \R$ at $p\in V$ as the average of $f|_{L_p}$ with respect to the standard volume form of $L_p$. The main property of the averaging operator is that $\mathrm{Av}(f)$ is actually smooth, so that in particular $\mathrm{Av}$ takes polynomials to polynomials. 

\section{The Slice Theorem}
\label{S:slice}

The goal of this section is to prove Theorem \ref{MT:slice}, the Slice Theorem, that will in turn be used in the remainder of this article.

In subsection \ref{SS:disconnected} we give the definition of (possibly) \emph{disconnected infinitesimal foliations}\footnote{The definition here is slightly different from the one given in \cite{AlexandrinoRadeschi15}.}, which is a generalization of representations by (possibly) disconnected groups, and of infinitesimal \srf s. This notion  is important in the proof of Theorem \ref{MT:slice} because, for a leaf $L$ with slice $V$,  the intersections of nearby leaves with $V$ naturally form a disconnected infinitesimal foliation in this sense, which moreover may fail to have connected leaves, even though the leaves of $\F$ are connected (as is already the case for group actions).  We show that the Algebraicity Theorem \cite[Theorem 1.1]{LytchakRadeschi15} carries over to disconnected infinitesimal foliations. We also show that Theorem \ref{MT:sphere} holds for  disconnected infinitesimal foliations, provided it holds for infinitesimal foliations (with connected leaves). 

We continue in subsection \ref{SS:linearization} with a definition and properties of linearized vector fields. The definition is somewhat different but equivalent to the one given in \cite{Molino}, section 6.4. Basic properties of linearized vector fields together with equifocality (see Proposition \ref{P:equifocality}, or  \cite[Proposition 4.3]{LytchakThorbergsson10}, or  \cite[Theorem 2.9]{Alexandrino10}) allow one (Corollary \ref{C:equifocality}) to ``push'' or ``slide'' a normal geodesic along a path in a leaf without changing its image in the leaf space.  This is a basic tool that can be used, in particular, to prove  (Proposition \ref{P:tube}) that closed leaves admit a tubular neighbourhood of constant radius, a fact that is well-known by experts, but for which the authors could not find a detailed proof. In case all leaves of a \srf\ $\F$ are closed, Proposition \ref{P:tube} can be interpreted as follows: every point $x$ in the leaf space $M/\F$ has a neighbourhood where $d(x,\cdot)^2$ is smooth. This in particular yields  smooth partitions of unity, see subsection \ref{SS:manifolds}.

Finally, in subsection \ref{SS:localmodels} we prove Theorem \ref{MT:slice} (the Slice Theorem), which describes a small tubular neighbourhood of a closed leaf up to foliated diffeomorphism. This is accomplished by reducing the structure group of the normal bundle of the leaf with the help of linearized vector fields. We also present a converse to the Slice Theorem.

\subsection{Disconnected infinitesimal foliations}
\label{SS:disconnected}

We define a generalization of infinitesimal \srf s where the leaves are allowed to be disconnected. 

Let $(V,\F_0)$ be an infinitesimal foliation, that is, a singular Riemannian foliation on a Euclidean space $V$ with $\{0\}$ a leaf of $\F_0$ (cf. Section \ref{S:preliminaries}). Let $\OO(V,\F_0)$ denote the group of isometries of $V$ sending leaves of $\F_0$ to (possibly different) leaves of $\F_0$, and $\OO(\F_0)\In \OO(V,\F_0)$ denote the normal subgroup of isometries sending each leaf to itself. Note that $\OO(V,\F_0)/\OO(\F_0)$ is the group of bijections of the leaf space $V/\F_0$ which lift to maps in $\OO(V)$. 

\begin{definition}[Disconnected foliation]
\label{D:disco}
A \emph{disconnected infinitesimal foliation} is a partition $\F$ of a Euclidean vector space $V$ into smooth submanifolds, satisfying two properties. First, the partition $\F_0$ obtained from $\F$ by taking path-connected components of leaves is an infinitesimal \srf; and second, there is a subgroup $\Gamma\In\OO(V,\F_0)/\OO(\F_0)$ whose action on the leaf space $V/\F_0$ is transitive on the connected components of each leaf of $\F$.
\end{definition}
When the leaves of $(V,\F_0)$ are compact, $V/\F_0$ is a Hausdorff metric space, and $\Gamma$ is a subgroup of its isometry group. When moreover the leaves of $\F$ are compact, $\Gamma$ has finite orbits, hence must be finite. In this case, the finite group $\Gamma$ is uniquely determined by $\F$.
 
Note also that the leaves of a disconnected infinitesimal foliation $(V,\F)$ are contained in spheres centered at the origin, and that the conclusion of the  Homothetic Transformation Lemma (see Lemma \ref{L:homothetic}) holds. In particular $(V,\F)$ is completely determined by its restriction to the unit sphere $S\In V$.

We mention two natural  examples of disconnected infinitesimal foliation. First, \emph{homogeneous} disconnected infinitesimal foliations are given by $K$-orbits, where $K\In\OO(V)$ is an arbitrary Lie subgroup. Here $\F_0$ is given by the orbits of the path-connected component $K_0$ of $K$. The group $K/K_0$ acts on $V/\F_0$, and $\Gamma$ is isomorphic to the quotient of $K/K_0$ by the innefective kernel. 
Second, if $\F$ is a \srf\ in a complete manifold $M$ and $L$ is a closed leaf, the partition $\F^p$ of the slice $V_p$ at a point $p\in L$ into the intersections of leaves of $\F$ with $V_p$ constitutes a disconnected infinitesimal foliation. Here $\pi_1(L)$ acts on $V/\F_0^p$ and $\Gamma$ is isomorphic to the quotient of $\pi_1(L)$ by the innefective kernel, see subsection \ref{SS:localmodels}.

\begin{remark} 
On the other hand, there are naturally occuring ``foliations'' with disconnected leaves that are \emph{not} disconnected infinitesimal foliations in the sense of Definition \ref{D:disco}. One source of such ``non-examples'' are infinitesimal  \srf s $\F_0$ with closed leaves, whose leaf space $V/\F_0$ admits  isometries that do not lift to $\OO(V)$. If $\Gamma$ is a (say finite) group containing such non-liftable isometries, then one may define the leaves of $\F$ to be the inverse images of $\Gamma$-orbits under the quotient map $V\to V/\F_0$. Some concrete homogeneous examples of such $\F_0$ may be extracted from the classification of orbit spaces of cohomogeneity $3$ representations, see \cite[Lemma 6.1]{Straume94}. They include polar representations with generalized Weyl group of the form $\Z/2\times D_4$ or $\Z/2\times D_6$, where $D_k$ denotes the dihedral group with $2k$ elements. Other such $\F_0$ include the quaternionic Hopf foliation of $\R^8$, the (inhomogeneous) octonionic Hopf foliation of $\R^{16}$, and, more generally, the Clifford foliation associated to any Clifford system $C_{m,k}$ with $m\equiv 0 \ (\text{mod }4)$ and $k$ odd, see \cite{Radeschi14}.

A different class of non-examples are the level sets in round spheres of some homogeneous polynomial $F$ such that $\|\nabla F\|^2(x)$ is a power of $\|x\|^2$.
These were classified in \cite{Tkachev14} and define a partition $\F$ into equidistant possibly disconnected submanifolds, such that $\F_0$ is isoparametric with codimension one regular leaves.
\end{remark}

Next we extend the Algebraicity Theorem (see Theorem \ref{T:algebraicity}, or \cite[Theorem 1.1]{LytchakRadeschi15}) to  disconnected infinitesimal foliations:
\begin{lemma}[Algebraicity]
\label{L:algebraicity}
Let $(V,\F)$ be a disconnected infinitesimal foliation with compact leaves. Then there are homogeneous polynomials $\psi_1, \ldots, \psi_l\in\R[V]$ such that the leaves of $\F$ are precisely the pre-images of points under $\psi=(\psi_1, \ldots, \psi_l):V\to \R^l$. Moreover $\psi$ induces a homeomorphism between $V/\F$ and the semi-algebraic subset $\psi(V)\In \R^l$.
\end{lemma}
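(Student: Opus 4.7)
The plan is to deduce the lemma from the Algebraicity Theorem \cite{LytchakRadeschi15} for $\F_0$ combined with the classical finiteness theorem for invariants of finite group actions. By the Algebraicity Theorem applied to the infinitesimal \srf\ $\F_0$, there exist homogeneous generators $\rho_1,\ldots,\rho_k$ of $\R[V]^{\F_0}$, and the map $\rho\colon V\to\R^k$ induces a homeomorphism from $V/\F_0$ onto a semi-algebraic subset of $\R^k$. The key observation is that $\Gamma$ acts naturally on the graded algebra $\R[V]^{\F_0}$: each $\gamma\in\Gamma\In\OO(V,\F_0)/\OO(\F_0)$ lifts to some $g\in\OO(V,\F_0)$ which acts linearly on $\R[V]$ preserving degree and preserving $\R[V]^{\F_0}$, and the induced action on $\R[V]^{\F_0}$ is independent of the choice of lift because any element of $\OO(\F_0)$ fixes each $\F_0$-leaf setwise and hence acts trivially on $\F_0$-basic polynomials.

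Since the leaves of $\F$ are exactly the unions of $\F_0$-leaves lying in a single $\Gamma$-orbit in $V/\F_0$, one checks that $\R[V]^\F=(\R[V]^{\F_0})^\Gamma$. The group $\Gamma$ is finite (as noted after Definition \ref{D:disco}, since the leaves of $\F$ are compact), and it acts by graded automorphisms on the finitely generated graded algebra $\R[V]^{\F_0}$, so by the Hilbert--Noether theorem (Reynolds averaging together with the Artin--Tate lemma) the subalgebra $(\R[V]^{\F_0})^\Gamma$ is finitely generated. Pick homogeneous generators $\psi_1,\ldots,\psi_l$ to obtain the polynomial map $\psi\colon V\to\R^l$ of the statement.

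It remains to verify that the fibers of $\psi$ are the leaves of $\F$, and that $\psi$ induces a homeomorphism $V/\F\to\psi(V)$ onto a semi-algebraic subset of $\R^l$. For separation of leaves: if $x,y$ lie in distinct $\F$-leaves, then the finite sets $\Gamma\cdot[x]$ and $\Gamma\cdot[y]$ are disjoint in $V/\F_0$, and using that $\R[V]^{\F_0}$ separates points of $V/\F_0$ one can construct $f\in\R[V]^{\F_0}$ with $f([x])=1$ and $f$ vanishing on $(\Gamma\cdot[x]\cup\Gamma\cdot[y])\setminus\{[x]\}$; averaging $f$ over $\Gamma$ produces an invariant polynomial with distinct values at $[x]$ and $[y]$. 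For the homeomorphism statement, the induced map $\bar\psi\colon V/\F\to\R^l$ is continuous and injective, and is proper because $\|\cdot\|^2$ is $\F$-basic (by the extension of the Homothetic Transformation Lemma to disconnected infinitesimal foliations noted earlier) and hence is a polynomial in the $\psi_i$'s; this forces $\bar\psi$ to be a topological embedding, with image semi-algebraic by Tarski--Seidenberg. The main technical hurdle I anticipate is this last bundle of topological checks; all other ingredients reduce cleanly to the connected Algebraicity Theorem plus classical finite-group invariant theory.
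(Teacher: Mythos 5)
Your proof is correct, and it takes the same high-level strategy as the paper (reduce to the Algebraicity Theorem for $\F_0$, then exploit the finite group $\Gamma$), but the route to finiteness and to the homeomorphism differs in a way worth flagging. The paper first \emph{linearizes} the $\Gamma$-action: it chooses the generators $\rho_1,\ldots,\rho_k$ of $\R[V]^{\F_0}$ to be a homogeneous basis of the orthogonal complement of $I^2$ in $I$ (so that $\mathrm{span}(\rho_1,\ldots,\rho_k)\In\R[V]^{\F_0}$ is $\Gamma$-invariant), which makes $\Gamma$ act \emph{linearly} on $\R^k$ extending its action on $\rho(V)\cong V/\F_0$. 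Once the action is linear, Hilbert's theorem for finite linear groups yields finitely many invariant generators $\eta_1,\ldots,\eta_m$ together, in one stroke, with separation of orbits and the orbit-space homeomorphism; the paper thus never has to argue properness or injectivity by hand. You instead stay entirely inside the graded algebra $\R[V]^{\F_0}$, identify $\R[V]^\F = (\R[V]^{\F_0})^\Gamma$, invoke Noether finiteness for a finite group acting on a finitely generated algebra, and then check separation and the topological embedding directly (via an averaged separating polynomial, properness of $\|\cdot\|^2$, and Tarski--Seidenberg). Both are valid; the paper's linearization is a shortcut that outsources exactly the ``bundle of topological checks'' you identify as the main hurdle, at the cost of the slightly fiddly choice of $\rho_i$ as an orthogonal complement of $I^2$. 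One small point to make explicit in your write-up: when you assert $\R[V]^\F=(\R[V]^{\F_0})^\Gamma$, you should note that $\F_0$-leaves are contained in $\F$-leaves, so every $\F$-basic polynomial is automatically $\F_0$-basic; this makes the equality an honest identification rather than merely an inclusion.
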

\begin{proof}
Let $\rho_1,\ldots \rho_k$ be homogeneous generators for the algebra $A$ of basic polynomials of $\F_0$. By \cite[Theorem 1.1]{LytchakRadeschi15}, the map $\rho=(\rho_1,\ldots \rho_k):V\to \R^k$ induces a homeomorphism from $V/\F_0$ to $\rho(V)\In \R^k$.
We claim that $\rho_1,\ldots \rho_k$ may be chosen so that the induced action of $\Gamma$ on $\rho(V)=V/\F_0$ extends to a linear action of $\Gamma$ on $\R^k$.

Indeed, the action of $\Gamma$ on $V/\F_0$ defines a natural action of $\Gamma$ on $A$ by graded algebra automorphisms. This action  preserves the natural inner product  on each graded part  $A_i\In\mathrm{Sym}^iV$ induced by the one on $V$. Denoting by $I$ the ideal in $A$ generated by all homogeneous elements of positive degree, we take  $\rho_1,\ldots \rho_k$ to be any set of homogeneous polynomials which form a basis for the orthogonal complement complement of $I^2$ inside $I$. Thus the action of $\Gamma$ on $A$ induces a linear action on $\R^k=\rm{span}( \rho_1,\ldots \rho_k)$, which extends the given action on $\rho(V)=V/\F_0$.

By Hilbert's Theorem, the algebra of polynomials on $\R^k$ invariant under the $\Gamma$-action is finitely generated by say $\eta_1, \ldots, \eta_m$, and separates orbits. Therefore the polynomials $\eta_i\circ\rho$ generate the subalgebra $B\In A$ consisting of basic polynomials for $\F$, and separate leaves. 

Finally, $B$ is a graded algebra by the Homothetic Transformation Lemma. Thus $B$ is generated by a finite number of homogeneous polynomials, and any such generating set makes up the desired $\psi_1, \ldots \psi_l$.
\end{proof}

The lemma below says that if the conclusion of Theorem \ref{MT:sphere} holds for $(V,\F_0)$, then it also holds for $(V,\F)$, where $\F$ is a disconnected infinitesimal foliation.
\begin{lemma}
\label{L:discosmooth}
With the same notations as in Lemma \ref{L:algebraicity}, $$C^\infty(V)^{\F_0}=\rho^*(C^\infty(\R^k))\  
\Longrightarrow \ C^\infty(V)^\F=\psi^*(C^\infty(\R^l)).$$
\end{lemma}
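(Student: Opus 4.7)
The plan is to pull $f$ back through $\rho$ via the hypothesis on $\F_0$, convert it into a globally $\Gamma$-invariant smooth function on $\R^k$ by averaging, apply Schwarz's theorem to the $\Gamma$-action on $\R^k$, and then rewrite the resulting $\Gamma$-Hilbert basis in terms of $\psi$ using the proof of Lemma~\ref{L:algebraicity}.

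Given $f \in C^\infty(V)^\F$, the function $f$ is a fortiori $\F_0$-basic, so the hypothesis furnishes $g \in C^\infty(\R^k)$ with $f = g\circ\rho$. Under the homeomorphism $V/\F_0 \cong \rho(V)$, the leaves of $\F$ correspond precisely to $\Gamma$-orbits, so $f$ being $\F$-basic is equivalent to $g|_{\rho(V)}$ being $\Gamma$-invariant. Crucially, the generators $\rho_1,\ldots,\rho_k$ have been chosen in the proof of Lemma~\ref{L:algebraicity} so that $\Gamma$ acts linearly on the ambient $\R^k$ extending its action on $\rho(V)$. I would then replace $g$ by its average $\tilde g = |\Gamma|^{-1}\sum_{\gamma\in\Gamma} g\circ\gamma^{-1}$: this is smooth and globally $\Gamma$-invariant on $\R^k$, and because $\Gamma$ preserves $\rho(V)$ while $g$ is already $\Gamma$-invariant there, $\tilde g$ still agrees with $g$ on $\rho(V)$, hence $\tilde g\circ\rho = f$.

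Next, I would apply G.~Schwarz's theorem \cite{Schwarz75} to the linear action of the finite group $\Gamma$ on $\R^k$: with $\eta=(\eta_1,\ldots,\eta_m)$ the Hilbert basis of $\Gamma$-invariants fixed in the proof of Lemma~\ref{L:algebraicity}, there is $h \in C^\infty(\R^m)$ with $\tilde g = h\circ\eta$, so $f = h\circ(\eta\circ\rho)$. Now the polynomials $\eta_i\circ\rho$ generate $B$ (again by the proof of Lemma~\ref{L:algebraicity}), and the $\psi_j$ do too by definition; therefore each $\eta_i\circ\rho$ can be written as $q_i(\psi_1,\ldots,\psi_l)$ for some polynomial $q_i$. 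Setting $q=(q_1,\ldots,q_m)\colon\R^l\to\R^m$ yields $f = (h\circ q)\circ\psi$ with $h\circ q \in C^\infty(\R^l)$, as required.

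The only substantive analytic input beyond Lemma~\ref{L:algebraicity} is Schwarz's theorem for the finite group $\Gamma$ acting linearly on $\R^k$; this is the main obstacle in the sense that every other step is formal manipulation of the objects already constructed in the previous lemma. The one place where care is needed is in the choice of extension: a naive $g$ need not be $\Gamma$-invariant off $\rho(V)$, which is precisely why the averaging step, together with the linearity of the $\Gamma$-action on $\R^k$ built into Lemma~\ref{L:algebraicity}, is necessary before Schwarz's theorem can be invoked.
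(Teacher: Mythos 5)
Your proof is correct and follows exactly the same route as the paper: pull $f$ back through $\rho$ using the hypothesis, average $g$ over $\Gamma$ to get a globally invariant function, apply Schwarz's theorem for the linear $\Gamma$-action on $\R^k$, and then express $\eta_i\circ\rho$ as polynomials in the $\psi_j$. Your write-up is more explicit about why the averaging preserves $f=\tilde g\circ\rho$ and why the linear extension of the $\Gamma$-action from Lemma~\ref{L:algebraicity} is what makes Schwarz's theorem applicable, but the underlying argument is identical.
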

\begin{proof}
Let $f\in C^\infty(V)^\F$, that is, a smooth function constant on the leaves of $\F$. Since the leaves of $\F$ are unions of leaves of $\F_0$, the hypothesis says that there is $g\in C^\infty(\R^k)$ such that $f=g\circ \rho$. Then $g|_{\rho(V)}$ is $\Gamma$-invariant, so that defining $\tilde{g}$ to be the result of averaging $g$ over $\Gamma$, we have $f=\tilde{g}\circ \rho$.

By Schwarz's Theorem \cite{Schwarz75}, there is $h\in C^\infty(\R^m)$ such that $\tilde{g}=h\circ\eta$, and therefore $f=h\circ\eta\circ\rho$. Finally, each $\eta_i\circ\rho$ is a polynomial in  $\psi_j$, and thus $f$ is a smooth function of $\psi_j$.
\end{proof}

\subsection{Linearized vector fields}
\label{SS:linearization}

Let $M$ be a complete Riemannian manifold, let $L\In M$ be a closed submanifold, and let $X$ be a vector field on $M$ which is tangent to $L$. Denoting by $\nu L$ the normal bundle of $L$, there exists a neighbourhood $W$ of the zero section in $\nu L$ and a neighbourhood $U$ of $L$ in $M$ such that $\exp^{\perp}|_W:W\to U$ is a diffeomorphism. The preimage $(\exp^\perp)^{-1}_*X|_U$, which we also denote by $X$, is a smooth vector field on $W$.

\begin{definition}
Given a vector field $X$ on $W\In\nu L$, we define the \emph{linearization of $X$ around $L$}  as
\[
X^\ell=\lim_{\lambda\to 0} (r_\lambda^{-1})_*(X\circ r_{\lambda}).
\]
Where $r_{\lambda}:\nu L\to \nu L$ is the rescaling $(p,v)\to (p,\lambda v)$. We say $X$ is \emph{linearized} if it coincides with its linearization.
\end{definition}

\begin{proposition}[Linearization]\label{P:linearization}
Let $X$ be a smooth vector field on $W$. Then its linearization $X^\ell$ is a well-defined, smooth vector field defined on the whole of $\nu L$, 
 invariant under rescalings. In particular, $X^\ell$ is basic for the foot point projection $r_{0}:\nu L\to L$.
\end{proposition}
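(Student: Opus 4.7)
The plan is to reduce the computation to explicit coordinates on $\nu L$, where the limit defining $X^\ell$ becomes a first-order Taylor expansion of $X$ in the fiber direction.

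First, I would work locally. Around any point $q \in L$, pick coordinates $(p^1,\dots,p^n)$ on $L$ together with a smooth local frame of $\nu L$ to get fiber coordinates $(v^1,\dots,v^r)$, so that $r_\lambda(p,v) = (p,\lambda v)$. Writing
\[
X = \sum_i a^i(p,v)\,\partial_{p^i} + \sum_\alpha b^\alpha(p,v)\,\partial_{v^\alpha},
\]
the hypothesis that $X$ is tangent to $L$ along $L$ forces $b^\alpha(p,0) = 0$, so Taylor expansion gives $b^\alpha(p,v) = \sum_\beta B^\alpha_\beta(p)\,v^\beta + O(|v|^2)$ with $B^\alpha_\beta(p) := (\partial b^\alpha/\partial v^\beta)(p,0)$. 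Since $dr_\lambda^{-1}(\partial_{p^i}) = \partial_{p^i}$ and $dr_\lambda^{-1}(\partial_{v^\alpha}) = \lambda^{-1}\partial_{v^\alpha}$, one computes
\[
(r_\lambda^{-1})_*(X \circ r_\lambda)(p,v) = \sum_i a^i(p,\lambda v)\,\partial_{p^i} + \sum_\alpha \frac{b^\alpha(p,\lambda v)}{\lambda}\,\partial_{v^\alpha},
\]
and letting $\lambda \to 0$ produces the explicit formula
\[
X^\ell(p,v) = \sum_i a^i(p,0)\,\partial_{p^i} + \sum_{\alpha,\beta} B^\alpha_\beta(p)\,v^\beta\,\partial_{v^\alpha}.
\]

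From this formula the remaining claims are immediate. The right-hand side is smooth in $(p,v)$ and makes sense for all $v \in \nu_p L$, so $X^\ell$ extends globally to $\nu L$; well-definedness across overlapping charts is automatic since the left-hand limit is intrinsic. Rescaling invariance $(r_\mu)_* X^\ell = X^\ell$ follows because the horizontal coefficients do not depend on $v$ while the vertical coefficients are linear in $v$, so the factor of $\mu$ picked up by $r_\mu$ in the vertical direction cancels the $\mu^{-1}$ from $dr_\mu^{-1}$. Finally, the foot-point projection $r_0$ kills verticals and sends $X^\ell$ to $\sum_i a^i(p,0)\,\partial_{p^i}$, which depends only on $p$, so $X^\ell$ is basic for $r_0$.

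The only substantive point, and the one where the set-up is really used, is the existence of $\lim_{\lambda\to 0} \lambda^{-1} b^\alpha(p,\lambda v)$: this limit exists and equals $B^\alpha_\beta(p)v^\beta$ precisely because $b^\alpha(p,0)=0$, i.e., because of the tangency of $X$ to $L$ built into the set-up preceding the definition of $X^\ell$. Everything else is routine bookkeeping with the chain rule and the intrinsic nature of the limit.
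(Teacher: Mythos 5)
Your proof is correct and follows essentially the same approach as the paper: working in local coordinates adapted to the bundle structure, expanding $X$ into horizontal and vertical components, using the tangency condition $b^\alpha(p,0)=0$ to compute the limit explicitly, and reading off smoothness, rescaling invariance, and the basic property from the resulting formula. The only cosmetic difference is that you observe the explicit formula already makes sense on all of $\nu L$, while the paper extends $X^\ell$ from $W$ to $\nu L$ via the scaling invariance $X^\ell_{(p,v)}=(r_\lambda^{-1})_*X^\ell_{(p,\lambda v)}$; both routes give the same global vector field.
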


\begin{proof}
The proposition is local in nature, so we can restrict our attention to a trivializing open set $B$, with trivialization $\phi: B\times \R^k\to \nu L|_B$, where $k=\codim\, L$.

Fix  coordinates $(x_1, \ldots x_m)$ on $B$ and let $(y_1, \ldots y_k)$ be the standard coordinates of $\R^k$.  The vector field $X$ can be written as
\[
X=\sum_{i=1}^m a_i(x,y){\partial\over \partial x_i}+\sum_{i=1}^kb_i(x,y){\partial \over \partial y_i}.
\]
Since $X|_L$ is tangent to $L$, it follows that $b_i(x,0)=0$ for every $i=1,\ldots k$ and every $x\in B$. It is easy to check that
\[
(r_\lambda^{-1})_*(X\circ r_{\lambda})=\sum_{i=1}^m a_i(x,\lambda y){\partial\over \partial x_i}+\sum_{i=1}^k \frac{b_i(x,\lambda y)}{\lambda} {\partial \over \partial y_i}.
\]
Taking the limit,
\begin{equation}
\label{E:linearization}
X^\ell=\sum_{i=1}^m a_i(x,0){\partial\over \partial x_i}+\sum_{i=1}^k\left(\sum_{j=1}^kC_{ij}(x)y_j\right){\partial \over \partial y_i}.
\end{equation}
where $C_{ij}(x)={\partial b_i\over \partial y_j}(x,0)$. From this formula it is clear that $X^\ell$ is well-defined and smooth in $W$, and invariant under homothetic transformations. In particular, there exists a unique extension of $X^\ell$ to the whole of $\nu L$. In fact, given $(p,v)$ in $\nu L$, define $X^\ell_{(p,v)}=(r_\lambda^{-1})_*X^\ell_{(p,\lambda v)}$ where $\lambda$ is small enough that $\lambda v\in W$.
\end{proof}

%

Now we specialize the discussion above to the case where the closed submanifold $L\In M$ is a leaf of a \srf\   $\F$. Recall that using the Homothetic Transformation Lemma (see Lemma \ref{L:homothetic}, or \cite[Lemma 6.2]{Molino}), there is a unique singular foliation on the normal bundle $\nu L$ that is scaling invariant and which corresponds to $\F|_U$ via the normal exponential map, where $U$ is any small enough tubular neighbourhood around $L$ (see also \cite{LytchakThorbergsson10}, section 4.3). 

\begin{proposition} [Linear flows]
\label{P:flow} Let $(M,\F)$ be a \srf\ , $L$ a closed leaf, and $X$ a vector field tangent to the leaves.  Then the linearization $X^\ell$ around $L$ and its flow $\Phi^t:\nu L\to \nu L$, $t\in \R$, satisfy:
\begin{enumerate}[a)]
\item $X^\ell$ is tangent to the leaves of $(\nu L,\nu \F)$, in particular $\Phi^t$ preserves leaves. 
\item For any $p\in L$, the restriction $\Phi^t|_{\nu_p L}$ is a linear orthogonal transformation from $\nu_p L$ onto $\nu_{\Phi^t(p)}L$.
\end{enumerate}
\end{proposition}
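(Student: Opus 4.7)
The plan for part (a) is to exploit the fact that the rescaling $r_\lambda:\nu L\to\nu L$ preserves the lifted foliation $\nu\F$ (this is the content of the Homothetic Transformation Lemma invoked to construct $\nu\F$). Since the pulled-back $X$ is tangent to leaves of $\nu\F$ on $W$, each vector field $(r_\lambda^{-1})_*(X\circ r_\lambda)$ with $\lambda>0$ is again leaf-tangent where defined; crucially, evaluated at a fixed point $p\in\nu L$, this vector lies in the \emph{single} closed linear subspace $T_p L_p\In T_p(\nu L)$ for all small $\lambda>0$, because $r_\lambda^{-1}$ sends the leaf $L_{r_\lambda(p)}$ through $r_\lambda(p)$ back to $L_p$. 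Passing to the limit $\lambda\to 0$ keeps $X^\ell(p)$ inside $T_p L_p$, so $X^\ell$ is leaf-tangent; its flow $\Phi^t$ then preserves leaves of $\nu\F$ because a smooth vector field tangent to the leaves of a \srf\ integrates to leaf-preserving diffeomorphisms.

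For part (b), I would proceed in three steps. First, by Proposition \ref{P:linearization}, $X^\ell$ is basic for the foot point projection $\pi:\nu L\to L$; from equation \eqref{E:linearization} its projection is $X|_L$, so the flow $\Phi^t$ covers the flow $\phi^t$ of $X|_L$ on $L$ and maps $\nu_p L$ into $\nu_{\phi^t(p)}L$. Second, equation \eqref{E:linearization} also shows that in a local trivialization the fiber component $\sum_{i,j}C_{ij}(x)y_j\,\partial/\partial y_i$ of $X^\ell$ is linear in the fiber coordinate $y$; restricting the flow ODE to a single fiber (along a fixed base trajectory $x(t)$) yields a time-dependent linear system in $y$, whose flow map is linear. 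Hence $\Phi^t|_{\nu_p L}$ is a linear map from $\nu_p L$ to $\nu_{\phi^t(p)}L$.

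The third step is norm preservation. The function $(p,v)\mapsto \|v\|^2$ on $\nu L$ is constant on the leaves of $\nu\F$: under $\exp^\perp$ it matches the squared distance to $L$ in $M$, and local equidistance of the leaves of $\F$ around the closed leaf $L$ forces $d(\cdot,L)^2$ to be constant on each nearby leaf; the scaling invariance of $\nu\F$ together with $\|\cdot\|^2\circ r_\lambda=\lambda^2\|\cdot\|^2$ then propagates this to all of $\nu L$. Since $X^\ell$ is leaf-tangent by (a), it annihilates $\|v\|^2$, so $\Phi^t$ preserves $\|v\|^2$ on every fiber. A linear norm-preserving map between inner product spaces is orthogonal by the polarization identity, completing (b).

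The main obstacle I anticipate is the passage to the limit in part (a): one must use the $r_\lambda$-invariance of $\nu\F$ to recognise that, for each fixed $p$, the whole one-parameter family of approximations $(r_\lambda^{-1})_*(X\circ r_\lambda)(p)$ lives in a single fixed subspace, rather than in a varying family of tangent spaces (which would be delicate because leaf dimensions can jump along $\nu L$). Everything else reduces to the explicit coordinate formula, standard linear ODE theory, and the geometric content of equidistance.
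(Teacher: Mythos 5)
Your proof is correct and follows essentially the same route as the paper: use the Homothetic Transformation Lemma to see that each approximation $(r_\lambda^{-1})_*(X\circ r_\lambda)$ is leaf-tangent and pass to the limit for part (a); then use basicness of $X^\ell$ (Proposition \ref{P:linearization}), the explicit linear-in-$y$ form from equation \eqref{E:linearization}, leaf preservation from (a), and the fact that leaves of $\nu\F$ lie in distance spheres for part (b). The one place you add genuine value is the observation that, at a fixed $p\in\nu L$, all the approximating vectors lie in the \emph{same} subspace $T_pL_p$ because $r_\lambda^{-1}$ carries $L_{r_\lambda(p)}$ to $L_p$ -- this is the correct justification for why the pointwise limit remains leaf-tangent even though leaf dimensions can jump, a subtlety the paper's one-line argument for (a) leaves implicit.
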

\begin{proof}
a) Since $X$ is tangent to the leaves, so is $(r_\lambda^{-1})_*X\circ r_\lambda$ by the Homothetic Transformation Lemma (Lemma \ref{L:homothetic}). By taking the limit $\lambda\to 0$, $X^\ell$ is tangent to the leaves.

b) By Proposition \ref{P:linearization}, $X^\ell$ is basic with respect to the footpoint projection, and therefore $\Phi^t$ takes fibers of $\nu L\to L$ to fibers. By Equation \eqref{E:linearization} the restriction $\Phi^t|_{\nu_p L}$ is a linear map. Since $\F$ is a \srf , the leaves are contained in distance tubes around $L$. By the previous item, $\Phi^t$ preserves leaves, so that the restriction $\Phi^t|_{\nu_p L}$ preserves the norm. In other words, it is an orthogonal transformation.
\end{proof}

\begin{corollary}
\label{C:equifocality}
Let $L$ be a closed leaf of a \srf\ $(M,\F)$, and $\gamma:[0,1]\to L$ a piecewise smooth curve with $\gamma(0)=p$. Then there exists a continuous map $\lift:[0,1]\times \nu_p L\to \nu L$ such that
\begin{enumerate}[a)]
\item $\lift(t,v)\in \nu _{\gamma(t)}L$ for every $(t,v)\in [0,1]\times \nu_p L $.
\item For every $t\in[0,1]$, the restriction $\lift|_{\{t\}\times\nu_p L}:\nu_p L\to \nu_{\gamma(t)} L$ is a linear isometry preserving the leaves of $\nu L$.
\item For every $s\in \R$, $\exp_{\gamma(t)}s\lift(t,v)$ belongs to the same leaf as $\exp_psv$.
\end{enumerate}
\end{corollary}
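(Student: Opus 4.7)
The plan is to construct $\lift$ as the time-$t$ flow, restricted to $\nu_pL$, of the linearization along $L$ of a vertical vector field whose integral curves trace $\gamma$. By concatenating the constructions on maximal smooth subintervals, it suffices to treat the case where $\gamma$ is smooth.

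First, I would use that smooth vertical vector fields generate $\F$ to produce a time-dependent vertical vector field realizing $\gamma'$. Cover $\gamma([0,1])$ by finitely many open sets $U_j\In M$ on each of which there exist smooth vertical vector fields $Y_{j,1},\ldots,Y_{j,k_j}$ whose values span the tangent space to the leaf at every point of $L\cap U_j$. Choose a partition $0=t_0<\ldots<t_N=1$ with $\gamma([t_j,t_{j+1}])\In U_j$, and solve the pointwise linear system for smooth coefficients $c_{j,i}:[t_j,t_{j+1}]\to\R$ satisfying $\gamma'(t)=\sum_i c_{j,i}(t)Y_{j,i}(\gamma(t))$. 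The time-dependent vertical vector field $X_t^j=\sum_i c_{j,i}(t)Y_{j,i}$ then has $\gamma|_{[t_j,t_{j+1}]}$ as an integral curve.

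Next, linearize each $X_t^j$ around $L$ via Proposition \ref{P:linearization} to obtain time-dependent vector fields $(X_t^j)^\ell$ defined on $\nu L$. By Proposition \ref{P:flow}(a) these are tangent to the leaves of $\nu\F$, and by (b) their flows restrict to linear orthogonal isomorphisms between normal fibers over $L$. Composing the time-$t_j$-to-$t$ flows of the $(X_t^j)^\ell$ yields a continuous family of diffeomorphisms $\Phi^t:\nu L\to\nu L$ along $\gamma$, and setting $\lift(t,v)=\Phi^t(v)$ for $v\in\nu_pL$ immediately gives properties (a) and (b).

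For (c), since $\Phi^t$ preserves the leaves of $\nu\F$ and restricts to a linear map on $\nu_pL$, the vectors $sv$ and $s\,\lift(t,v)=\Phi^t(sv)$ lie in the same leaf of $\nu\F$ for every $s\in\R$. For $|s|$ sufficiently small, $\exp^\perp$ is a foliated diffeomorphism onto a tubular neighbourhood of $L$, and consequently $\exp_p(sv)$ and $\exp_{\gamma(t)}(s\,\lift(t,v))$ lie in the same leaf of $\F$. The extension to arbitrary $s$ will invoke equifocality (\cite[Proposition 4.3]{LytchakThorbergsson10} or \cite[Theorem 2.9]{Alexandrino10}), which guarantees that leaves of $\F$ remain equidistant along normal geodesics. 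This final step is the main obstacle: the linearized picture by itself only controls the situation inside a tubular neighbourhood of $L$, and transferring leaf-preservation to normal geodesics of arbitrary length requires the external equifocality input.
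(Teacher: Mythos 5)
Your proposal is correct and follows the same essential route as the paper: realize $\gamma$ via a vertical vector field, linearize it around $L$ via Proposition \ref{P:linearization}, use Proposition \ref{P:flow} for parts (a) and (b), and invoke external equifocality for part (c). The one difference is in how the vertical field is built: the paper subdivides $\gamma$ into embedded arcs, each of which is the integral curve of a single \emph{time-independent} vector field on $L$ (extended vertically to $M$), whereas you construct a \emph{time-dependent} field by choosing local spanning families of vertical fields and solving $\gamma'(t)=\sum_i c_{j,i}(t)Y_{j,i}(\gamma(t))$ for smooth coefficients. Your route works — the Gram matrix has constant rank $\dim L$ along $\gamma$, so the minimum-norm solution $c_{j,i}$ is smooth — but it quietly requires a time-dependent version of Proposition \ref{P:flow}, which is only stated for autonomous fields; the paper's subdivision into embeddings sidesteps this entirely. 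Your two-step treatment of (c) (small $s$ via the foliated tubular neighbourhood, then arbitrary $s$ via equifocality) makes explicit what the paper compresses into a single appeal to equifocality, and correctly identifies that the linearized picture alone cannot give (c) globally.
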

\begin{proof}
Let $0=t_0<t_1<\ldots < t_N=1$ be a partition such that for each $i=1, \ldots N$,  $\gamma_i=\gamma|_{[t_{i-1},t_i]}$ is an embedding, and thus the integral curve of some smooth vector field $X_i$ on $L$. Since $\F$ is a singular foliation, $X_i$ may be extended to a vector field on $M$ that is tangent to all leaves, which we again call $X_i$. Let $X_i^\ell$ denote the linearization of $X_i$ around $L$, and $\Phi_i^t$ its flow. 
For $(t,v)\in [t_{i-1},t_i]\times \nu_pL$, define
$$\lift(t,v)=\Phi_i^{t-t_{i-1}}\circ \Phi_{i-1}^{t_{i-1}-t_{i-2}} \circ \cdots \circ \Phi_2^{t_2-t_1}\circ\Phi_1^{t_1} (v)  $$
Parts a) and b) follow from Proposition \ref{P:flow}.

Since $v$ and $\lift(t,v)$ belong to the same leaf in $\nu L$, part c)  follows directly from equifocality (see Proposition \ref{P:equifocality}, or Proposition 4.3 in \cite{LytchakThorbergsson10}, or Theorem 2.9 in \cite{Alexandrino10}). 
\end{proof}

As an application we can recover the following result, which is well-known to the experts:
\begin{proposition}[Tube with constant radius]
\label{P:tube}
Let $(M,\F)$ be a \srf\ with $M$ complete, and let $L$ be a closed leaf. Then there is $\epsilon>0$ such that the normal exponential map $\exp :\nu^\epsilon L\to M$ is a diffeomorphism onto its image.
\end{proposition}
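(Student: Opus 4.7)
\medskip
\noindent
My plan is to combine the pointwise local existence of tubular neighborhoods (from the inverse function theorem) with Corollary~\ref{C:equifocality} to obtain a uniform radius, and then to deduce global injectivity using the closedness of $L\In M$.

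First, at each $p\in L$ the differential of the normal exponential map $\exp^\perp:\nu L\to M$ at $(p,0)$ is the natural isomorphism $T_pL\oplus\nu_pL\to T_pM$. The inverse function theorem then gives $\epsilon_p>0$ and an open neighborhood $U_p\In L$ of $p$ such that $\exp^\perp:\nu^{\epsilon_p}L|_{U_p}\to M$ is a diffeomorphism onto an open subset of $M$.

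Next, I would use Corollary~\ref{C:equifocality} to make the radius uniform along $L$. Given $p,q\in L$ joined by a piecewise smooth path $\gamma$, the corollary produces a linear isometry $\lift_1:\nu_pL\to\nu_qL$ preserving the leaves of the normal foliation $\nu\F$, with $\exp_q(s\lift_1 v)$ and $\exp_p(sv)$ lying in the same leaf of $\F$ for every $s\in\R$. The intrinsic fiberwise invariants of $\exp^\perp$ — in particular the focal distance along a single fiber and the largest radius on which $\exp^\perp|_{\nu_pL}$ is injective — are preserved by $\lift_1$, hence constant along $L$. This yields a uniform $\epsilon>0$ such that $\exp^\perp$ is a local diffeomorphism on $\nu^\epsilon L$ and injective on each fiber.

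For global injectivity, suppose $\exp^\perp(p_1,v_1)=\exp^\perp(p_2,v_2)=r$ with $\|v_i\|<\epsilon$. Then $r$ has distance less than $\epsilon$ from $L$, and since $L$ is closed in the complete manifold $M$ this distance is realized at some $p_0\in L$. For $\epsilon$ below the uniform fiberwise focal radius from the previous step, a standard Riemannian argument combined with the local tubular neighborhood at $p_0$ forces $p_1=p_2=p_0$ and then $v_1=v_2$.

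The main obstacle I foresee is the second step: because $\lift_1$ only preserves $\exp^\perp$ up to leaves of $\F$ rather than pointwise, showing that the fiberwise focal and injectivity radii transport correctly requires some care. I would handle this by arguing that any focal point of $\exp^\perp|_{\nu_qL}$, or any pair of distinct vectors in $\nu_q L$ that $\exp^\perp$ identifies, would produce via $\lift_1^{-1}$ together with equifocality a corresponding obstruction at $p$, contradicting the local tubular neighborhood constructed in the first step.
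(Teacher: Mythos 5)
Your plan breaks down at the third step. For a closed but \emph{non-compact} leaf $L$, there is no ``standard Riemannian argument'' that promotes a uniform bound on the fiberwise focal radius (plus a local tubular neighbourhood at the foot point $p_0$ of the minimizer) to global injectivity of $\exp^\perp$ on $\nu^\epsilon L$. The usual proof of the tubular neighbourhood theorem passes to a convergent subsequence of a putative failure $(p_n,v_n)\neq(q_n,w_n)$, $\|v_n\|,\|w_n\|\to 0$, $\exp^\perp(p_n,v_n)=\exp^\perp(q_n,w_n)$, and this uses compactness of $L$ in an essential way. Without it, $p_1$ and $p_2$ in your notation may lie far apart in $L$, and nothing you have established rules out ``self-intersections at a distance'': a flat (hence everywhere focal-free) connected surface that accordion-folds back on itself gives a closed submanifold for which no uniform tube exists, showing that constant focal radius alone cannot suffice. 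The singular Riemannian foliation structure is precisely what excludes this, and the paper's argument invokes it at this exact point. Rather than transporting tube radii along $L$, the paper fixes $\epsilon$ at a \emph{single} point $p$ by requiring that $\exp_p(tx)$ be the unique minimizer from $L$ to $\exp_p(x)$ for all $x\in\nu_p^\epsilon L$; then, given two distinct short normal geodesics $c_1, c_2$ from $q_1,q_2\in L$ meeting at $r$, it uses Corollary \ref{C:equifocality} to slide $c_2$ so it emanates from $p$ (landing at $r'\in L_r$), slides $c_1,c_2$ along $L_r$ to $r'$ to get distinct normals $w_1\neq w_2$ there, and derives a contradiction from uniqueness of the minimizer to $r'$.

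There is also a genuine issue in your second step, beyond the care you flag. The lift $\lift$ of Corollary \ref{C:equifocality} guarantees only that $\exp_p(sv)$ and $\exp_q(s\lift v)$ lie in the \emph{same leaf}, not that they are equal; so $\exp_p(v_1)=\exp_p(v_2)$ does not transport to $\exp_q(\lift v_1)=\exp_q(\lift v_2)$, and the ``largest radius on which $\exp^\perp|_{\nu_pL}$ is injective'' is not obviously a $\lift$-invariant. The paper avoids transporting any such metric quantity: the only thing it transports is leaf membership, which $\lift$ preserves by construction, and it gets a contradiction from the minimization property at $p$ rather than from an injectivity or focal radius bound.
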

\begin{proof}
Fix any point $p\in L$ and let $\epsilon>0$ such that, for every $x\in\nu^\epsilon_p L$, the geodesic segment  $\exp_p(tx)$, $0\leq t\leq 1$, is the unique path of minimal length between $L$ and $\exp_p(x)$ in $M$.

It is enough to show that $\exp :\nu^\epsilon L\to M$ is injective. Suppose not. Then there are distinct horizontal geodesic segments $c_1, c_2$ of lengths $\ell_1\leq \ell_2<\epsilon$ joining points $q_1, q_2\in L$ to the same point $r$. Parametrize these segments by arc length so that $c_1(0)=c_2(0)=r$.


Let $\gamma:[0,1]\to L$ be a piecewise smooth curve joining $q_2$ to $p$, and take a lift $\lift:[0,1]\times \nu_{q_2}L\to \nu L$ as in Corollary \ref{C:equifocality}. Let $v\in\nu_p L$ be given by $v=\lift(1,-c_2'(\ell_2))$, so that $\exp_p sv$ belongs to the same leaf as $c_2(\ell_2-s)$ for every $s$. In particular for $s=\ell_2$, the point $r'=\exp_p \ell_2 v$ belongs to $L_r$. Analogously, taking a curve in $L_r$ from $r$ to $r'$, we can move $c_1'(0),c_2'(0)$ to \emph{distinct} unit normal vectors $w_1, w_2\in \nu_{r'}L_r$ such that $\exp_{r'}\ell_i w_i$ belong to $L$, $i=1,2$.

Therefore, by the choice of $\epsilon$, and $l_1\leq l_2$, the three paths $\exp_psv$, $\exp_{r'}sw_1$ and $\exp_{r'}sw_2$, for $0\leq s\leq 1$, must coincide. But this contradicts the fact that $w_1\neq w_2$.
\end{proof}

\begin{figure}[!htb]
\label{figure}
\caption{Proof of Proposition \ref{P:tube}}
\includegraphics[width=0.7\textwidth]{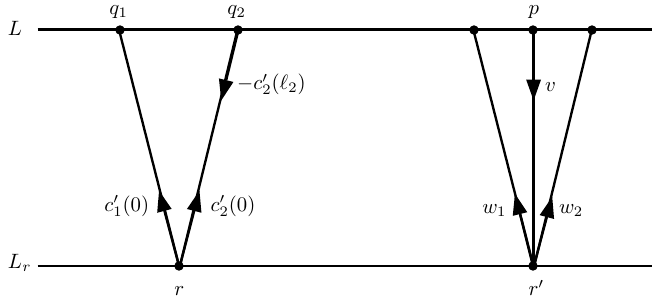}
\end{figure}

\subsection{Local models and the Slice Theorem}
\label{SS:localmodels}

Let $(M,g)$ be a complete Riemannian manifold, and $\F$ a \srf\ of $M$. Fix a closed leaf $L$ and a point $p\in L$. Denote by $\nu L$ the normal bundle. There is a unique singular foliation on $\nu L$ which is scaling invariant and corresponds to $\F|_U$ via the normal exponential map, where $U$ is a small enough neighbourhood of $L$.

Define the \emph{slice} $V=V_p$ at $p$ by $V=\nu_pL$, and the \emph{disconnected slice foliation} $\F^p$ of $V$ by first intersecting the leaves of $\F|_U$ with $\exp_p(V^\epsilon)\simeq V^\epsilon$, and then extending this foliation by homotheties on the whole of $V$.
Define the (connected) \emph{slice foliation} $\F^p_0$ by taking connected components of the leaves of $\F^p$.
We claim that $\F^p$ is a disconnected infinitesimal foliation in the sense of subsection \ref{SS:disconnected}. Indeed, $\F^p_0$ is an infinitesimal \srf\  by \cite[Thm 6.1 and Prop. 6.5]{Molino}. Moreover, there is a homomorphism $\pi_1(L)\to \OO(V,\F^p_0)/\OO(\F^p_0)$ sending a loop $\gamma$ to a lift $\lift(1,\cdot)\in \OO(V,\F^p_0)$ of $\gamma$ in the sense of Corollary \ref{C:equifocality}. By construction, the image $\Gamma$ of this homomorphism acts transitively on the connected components of each leaf of $\F^p$. Indeed, two points $q_1,q_2$ in the same leaf of $\F^p$ are, by definition, contained in a common leaf $L'$ of $\F$. Since $L'$ is connected, one can join $q_1$ and $q_2$ via a path in $L'$, whose projection to $L$ is the desired loop $\gamma$.



The main step in  our  proof of the Slice Theorem is the following lemma, which in turn is proved using linearized vector fields (see subsection \ref{SS:linearization}).
\begin{lemma}
\label{L:structure}
With notation above,  the structure group of the  normal bundle $\nu L$ reduces to $\OO(\F^p)$.
\end{lemma}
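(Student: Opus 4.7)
\medskip

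\noindent\textbf{Proof plan.} The goal is to exhibit $\nu L$ as associated to a principal $\OO(\F^p)$-bundle; equivalently, to build a principal $\OO(\F^p)$-subbundle $P$ inside the orthonormal frame bundle $\mathrm{Fr}(\nu L)$, where a frame at $q$ is identified with a linear isometry $V\to\nu_qL$. For this I would first set up the leaf-space bookkeeping: by Proposition \ref{P:tube} the tube of radius $\epsilon$ around $L$ is a well-defined neighborhood $U$ in which every leaf of $\F$ meets every slice $\exp_q(V_q^\epsilon)$ (this is a direct consequence of equifocality, Corollary \ref{C:equifocality}). Since $\F^q$ is obtained from $\F|_U$ by intersection with $\exp_q(V_q^\epsilon)$ and extension by homotheties, there is a canonical bijection $\tau_q: V/\F^p \to V_q/\F^q$ sending the class of $x\in V$ to the class of the unique intersection point in $V_q$ of the leaf of $\F$ through $\exp_p(x)$. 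Then define
\[
P_q \;=\; \bigl\{\,\phi:V\to V_q \ \text{linear isometry}\ \big|\ \phi(L_x)= \text{leaf of }\F^q\text{ with class }\tau_q([x])\ \text{for every } x\in V\,\bigr\},
\]
and $P=\bigsqcup_{q\in L}P_q\subset\mathrm{Fr}(\nu L)$.

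\medskip

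\noindent Next I would verify that $P\to L$ is a principal $\OO(\F^p)$-bundle. \emph{Non-emptiness} of each $P_q$ follows from Corollary \ref{C:equifocality}: any piecewise smooth path $\gamma:[0,1]\to L$ from $p$ to $q$ yields a linear isometry $\lift(1,\cdot):V\to V_q$ which preserves the leaves of $\nu\F$ and, by part (c) of the corollary combined with the Homothetic Transformation Lemma, realizes precisely the correspondence $\tau_q$. The group $\OO(\F^p)$ acts from the right on $P_q$ by $\phi\cdot g=\phi\circ g$, and this action is \emph{free and transitive}: given $\phi_1,\phi_2\in P_q$, the composition $g=\phi_2^{-1}\circ\phi_1\in\OO(V)$ maps each leaf of $\F^p$ to itself (both $\phi_i$ send it to the same leaf of $\F^q$), so $g\in\OO(\F^p)$.

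\medskip

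\noindent For \emph{smooth local triviality}, around any $q_0\in L$ pick a simply connected neighborhood $U$ and a smooth family of paths $\gamma_q:[0,1]\to L$ from $p$ to $q\in U$ (concatenate a fixed path $p\rightsquigarrow q_0$ with the radial geodesics of $U$). The map $q\mapsto \lift_{\gamma_q}(1,\cdot)$ is built, as in the proof of Corollary \ref{C:equifocality}, by composing finitely many flow maps of linearized vector fields; by Proposition \ref{P:linearization} these are smooth in all arguments, so this yields a smooth section of $P|_U$. Consequently $P$ is a smooth principal $\OO(\F^p)$-bundle, and by construction the associated bundle $P\times_{\OO(\F^p)}V$ is canonically identified with $\nu L$, giving the desired reduction of structure group.

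\medskip

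\noindent The main obstacle, as I see it, is not in the formal bundle-theoretic construction but in carefully matching the leaf-by-leaf bijection $\tau_q$ with what the equifocal lifts actually do. One has to be precise that $\lift(1,\cdot)$ preserves individual leaves of $\nu\F$ (part (c) of Corollary \ref{C:equifocality}) and that under the identification $V_q^\epsilon\cong \exp_q(V_q^\epsilon)\subset U$ this matches the leaf identification used to define $\tau_q$. Once this is unpacked, freeness/transitivity of the $\OO(\F^p)$-action and smooth local triviality both drop out of the material already developed in subsection \ref{SS:linearization}.
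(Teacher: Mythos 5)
Your proposal is correct and follows essentially the same route as the paper: equifocal lifts from Corollary~\ref{C:equifocality} supply frames that preserve the leaf identification, $\OO(\F^p)$ acts freely and transitively on each fiber of frames, and smooth local triviality comes from Proposition~\ref{P:linearization}. You present it by exhibiting $P$ directly as a sub-bundle of the frame bundle, while the paper constructs local trivializations and takes the associated principal bundle afterwards; these are equivalent viewpoints.

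One implementation caveat: your smooth-triviality step appeals to lifting \emph{radial geodesics} from $q_0$, but these are not integral curves of a single smooth vector field on $L$ (any vector field whose integral curves are the radial geodesics must vanish at $q_0$, so its time-one flow cannot reach $q\neq q_0$). The paper sidesteps this by using coordinate vector fields: choose linearized vertical extensions $X_i$ of $\partial/\partial x_i$ and set $\lift^{(x_1,\ldots,x_n)}=\Phi_n^{x_n}\circ\cdots\circ\Phi_1^{x_1}$, which depends smoothly on $(x_1,\ldots,x_n)$ simply because each flow is smooth in its time parameter. Your phrase ``composing finitely many flow maps of linearized vector fields\ldots smooth in all arguments'' is exactly this device; just replace the radial parametrization with the axis-parallel one and the section of $P|_U$ is manifestly smooth.
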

\begin{proof}
We construct explicit local trivializations.
Let $q\in L$ and $B$ a small ball in $L$ centered at $q$. Fix a coordinate system $x_1, \ldots x_n$ on $B$ such that $(x_1,\ldots,x_n)(q)=(0,\ldots,0)$, and a piecewise smooth curve $\gamma$ joining $p$ to $q$. By Corollary \ref{C:equifocality}, there is $\lift:[0,1]\times V\to \nu L$ such that $\lift(1,\cdot)$ is an isometry $V\to\nu_q L$ that preserves the leaves of $\F$. In a way similar to the proof of Corollary \ref{C:equifocality}, we  construct 
$$\lift^{(x_1,\ldots, x_n)}:\nu_q L\to\nu_{(x_1,\ldots, x_n)} L$$
 a linear isometry preserving leaves of $\F$ that depends smoothly on $(x_1,\ldots,x_n)$. Namely, we choose linearized vertical vector fields $X_i$ extending $\frac{\partial}{\partial x_i}$, denote their flows by $\Phi_i$, and let 
$$\lift^{(x_1,\ldots, x_n)}=\Phi_n^{x_n}\circ\cdots\circ \Phi_1^{x_1}$$
Then $\lift^{(x_1,\ldots, x_n)}\circ\lift(1,\cdot)$ defines a trivialization of $\nu L$ over $B$, and repeating the same procedure for a collection of small balls covering $L$ yields the desired  reduction of structure group.
\end{proof}

We are now ready to prove the Slice Theorem:
\begin{proof}[Proof of Theorem \ref{MT:slice}]
By Proposition \ref{P:tube}, there is $\epsilon>0$ such that the normal exponential map is a diffeomorphism from $\nu^ \epsilon L$ to an open tube $U\In M$. Take $P$ the principal $\OO(\F^p)$-bundle associated to the local trivializations given by Lemma \ref{L:structure}.

Consider the singular foliation $P\times\F_0^p$ of $P\times V$ whose leaves are $P\times L'$, where $L'$ runs through the leaves of $\F^p_0$. Since $\OO(\F^p)$ preserves the leaves of $\F^p_0$, there is a well-defined induced foliation $\bar{\F}=P\times_{\OO(\F^p)}\F_0$ of the quotient $P\times_{\OO(\F^p)} V$.

By construction of the local trivializations, it is clear that the image of a leaf of $\bar{\F}$ under the diffeomorphism $P\times_{\OO(\F^p)} V^\epsilon\to U$ is contained in one leaf of $\F|_U$. On the other hand, since the leaves of $\F|_U$ are connected, such image must coincide with a leaf.
\end{proof}

In particular, we have the following description of smooth basic functions on a tube around a closed leaf:
\begin{corollary}
\label{C:slice}
Let $(M,\F)$ be a \srf\ of the complete manifold $M$,   and $L$ a closed leaf through $p\in M$. Then the  restriction to the slice $\exp_p(V^\epsilon)\simeq V^\epsilon$ 
$$ |_{V^\epsilon}:C^\infty(U)^{\F|_U}\to C^\infty(V^\epsilon)^{\F^p}$$
is an isomorphism between the spaces of smooth basic functions, where $U$ is an $\epsilon$-tube around $L$, and $\F^p$ is the slice foliation.
\end{corollary}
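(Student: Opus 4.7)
The proof would proceed by invoking the Slice Theorem (Theorem \ref{MT:slice}) and its proof, which furnishes a foliated diffeomorphism $U\cong P\times_K V^\epsilon$ carrying $\F|_U$ to $P\times_K\F_0^p$, where $K=\OO(\F^p)$ and $P\to L$ is the principal $K$-bundle constructed there. Under this identification, the slice corresponds to the embedding $\iota:V^\epsilon\hookrightarrow P\times_K V^\epsilon$, $v\mapsto [e,v]$, for a chosen $e\in P$ in the fiber over $p$, while the leaves of $\F|_U$ become exactly the sets $P\times_K L'$ with $L'$ a leaf of $\F_0^p$.

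Well-definedness of the restriction map is immediate from the construction of $\F^p$ in subsection \ref{SS:localmodels}: the intersection of a leaf of $\F|_U$ with the slice is, by definition, a leaf of $\F^p$. Injectivity follows from the fact that every leaf meets the slice; indeed, a short computation in the associated bundle shows $\iota(V^\epsilon)\cap (P\times_K L')=\iota(L')$, which is nonempty. For surjectivity, given $g\in C^\infty(V^\epsilon)^{\F^p}$, I would form $\hat g\in C^\infty(P\times V^\epsilon)$ defined by $\hat g(p',v):=g(v)$.

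The crucial observation is that $g$ is automatically $K$-invariant: since $K=\OO(\F^p)$ preserves every leaf of $\F^p$, the points $v$ and $kv$ lie in a common $\F^p$-leaf for each $k\in K$, and hence $g(kv)=g(v)$. Consequently $\hat g$ is invariant under the free diagonal $K$-action on $P\times V^\epsilon$ and descends to a smooth function $f$ on $P\times_K V^\epsilon\cong U$. The identity $f\circ \iota=g$ is tautological, and $f$ is basic: on any leaf $P\times_K L'$ the second coordinate ranges through a single $\F^p$-leaf, on which $g$ is constant. The only step requiring genuine care is the $K$-invariance of $g$, where the identification $K=\OO(\F^p)$ from the proof of Theorem \ref{MT:slice} is crucially used; everything else is a formality of associated bundle constructions.
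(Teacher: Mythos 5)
Your proof is correct and follows essentially the same route as the paper's: surjectivity by forming $(p',v)\mapsto g(v)$ on $P\times V^\epsilon$, noting $\OO(\F^p)$-invariance because $\OO(\F^p)$ preserves each $\F^p$-leaf, and descending via the Slice Theorem; injectivity because the slice meets every leaf. One tiny slip: for a leaf $L'$ of $\F_0^p$, the intersection $\iota(V^\epsilon)\cap(P\times_K L')$ equals $\iota(K\cdot L')$ rather than $\iota(L')$ in general, since $K=\OO(\F^p)$ may permute connected components of an $\F^p$-leaf; this does not affect the nonemptiness you need.
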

\begin{proof}
The restriction map is injective because the slice $V^\epsilon$ meets all leaves of $\F|_U$. This in turn follows from the existence of smooth vertical vector fields which generate $\F$, as in corollary \ref{C:equifocality}. 

Turning to surjectivity, let $f\in C^\infty (V^\epsilon)^{\F^p}$. We construct a smooth basic extension of $f$ to $U$. Start with the function $P\times V^\epsilon\to\R$ given by $(x,y)\mapsto f(y)$. It is $\OO(\F^p)$-invariant, and hence defines a smooth basic function $\bar{f}:P\times_{\OO(\F^p)} V^\epsilon\to\R$. By Theorem \ref{MT:slice}, $\bar{f}$ corresponds to a smooth basic function on $U$, which restricts to $f$. 
\end{proof}


Finally, we present a converse to Theorem \ref{MT:slice} for the sake of completeness. Consider a 4-tuple $(L,V,\F,P)$ such that $L$ is a connected manifold, $(V,\F)$ a disconnected infinitesimal foliation, and $P$ is a principal $\OO(\F)$ bundle over $L$, subject to the following condition:  letting $H\In \pi_0(\OO(\F))$ denote the image of the map $\alpha:\pi_1(L)\to \pi_0(\OO(\F))$ induced by $P$, then $H$ acts transitively on $\pi_0(L')$ for every leaf $L'$ of $\F$.

Given any such tuple one can define $(P\times_{\OO(\F)}V, P\times_{\OO(\F)}\F_0)$ as in the proof of Theorem \ref{MT:slice}, and the condition above ensures that the leaves of $P\times_{\OO(\F)}\F_0$ are connected.

\begin{proposition}[Converse of Slice Theorem]\label{P:converse}
Given any 4-tuple $(L,V,\F,P)$ as above, there exists a metric on $P\times_{\OO(\F)}V$ such that $(P\times_{\OO(\F)}V, P\times_{\OO(\F)}\F_0)$ becomes a singular Riemannian foliation, and the metric projection onto $L=P\times_{\OO(\F)}\{0\}$ is a Riemannian submersion.
\end{proposition}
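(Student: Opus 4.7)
My plan is to construct the metric via a standard associated-bundle procedure and reduce the singular Riemannian foliation axioms on $E=P\times_{\OO(\F)}V$ to the trivial case of the product foliation on $P\times V$.

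First, fix any Riemannian metric $g_L$ on $L$, any bi-invariant metric on the compact Lie group $\OO(\F)$, and a principal connection on $P$. These data determine a right-$\OO(\F)$-invariant Kaluza-Klein metric $g_P$ on $P$ for which $\pi_P:P\to L$ is a Riemannian submersion. Equip $V$ with its Euclidean metric, which is $\OO(\F)$-invariant since $\OO(\F)\In\OO(V)$, and $P\times V$ with the product metric. The canonical diagonal action of $\OO(\F)$ on $P\times V$ is free, proper, and isometric, so the quotient $E$ inherits a smooth Riemannian metric $g_E$ for which the orbit map $\pi_E:P\times V\to E$ is a Riemannian submersion. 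Since every $\pi_E$-fiber is contained in a single fiber of the composition $P\times V\to P\to L$, the induced projection $\bar\pi:E\to L$ is itself a Riemannian submersion, and its restriction to the zero section $L=P\times_{\OO(\F)}\{0\}\In E$ is the identity.

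Second, I observe that $(P\times V,P\times\F_0)$, whose leaves are $P\times L'$ for $L'$ an $\F_0$-leaf, is itself a singular Riemannian foliation. Smooth generators of $\F_0$ on $V$ lift, extended by zero on the $P$-factor, to smooth $P\times\F_0$-vertical vector fields. The equidistance axiom is immediate: a product geodesic $(\gamma_P(t),\gamma_V(t))$ perpendicular to $P\times L'$ at $(p_0,v_0)$ must have $\gamma_P'(0)=0$ and $\gamma_V'(0)\perp T_{v_0}L'$, so $\gamma_P\equiv p_0$ and $\gamma_V$ is a $V$-geodesic perpendicular to $L'$; the SRF axiom applied to $\F_0$ then preserves the perpendicularity along the whole geodesic.

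Third, I descend this structure through $\pi_E$. The diagonal action of $\OO(\F)$ permutes the leaves of $P\times\F_0$ (sending $P\times L'$ to $P\times gL'$), so the image partition $\bar\F=P\times_{\OO(\F)}\F_0$ is well defined on $E$; the hypothesis that $H$ acts transitively on $\pi_0(L')$ for every $\F$-leaf $L'$ guarantees that the leaves of $\bar\F$ are connected. A geodesic $\bar\gamma$ in $E$ perpendicular to a leaf $\bar L'$ at $[p_0,v_0]$ has a unique $\pi_E$-horizontal lift, which by the submersion-perpendicularity correspondence starts perpendicular to the component $P\times L'$ of the saturated preimage $\pi_E^{-1}(\bar L')=P\times(\OO(\F)\cdot L')$; by the previous step this lift stays perpendicular to every $P\times L''$-leaf it meets, and projecting back establishes equidistance for $\bar\F$. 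Smooth generators of $\bar\F$ are built locally: in a trivialization $\pi^{-1}(U)\cong U\times V$ coming from a local section of $P$, the foliation restricts to $U\times\F_0$, so local generators of $\F_0$ multiplied by bump functions and reassembled via a partition of unity provide the required global generating family.

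The principal technical point is the interplay between the two layers of connectedness: $\OO(\F)$ may be disconnected and does not in general preserve individual $\F_0$-leaves but only permutes those within a common $\F$-leaf, so the hypothesis on $H$ is precisely what guarantees both the connectedness of $\bar\F$-leaves and that the horizontal lift of a perpendicular geodesic remains inside a single $P\times L'$-component of $\pi_E^{-1}(\bar L')$, which is what makes the equidistance argument of the previous step applicable.
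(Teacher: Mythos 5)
Your proof is correct and follows essentially the same outline as the paper's: equip $P$ with an $\OO(\F)$-invariant metric making $P\to L$ a Riemannian submersion, give $P\times V$ the product metric, observe $(P\times V,P\times\F_0)$ is a SRF, and pass to the quotient by the free isometric $\OO(\F)$-action. The only structural difference is in producing the invariant metric on $P$: the paper embeds $P$ in the associated $\OO(V)$-bundle $P'=P\times_{\OO(\F)}\OO(V)$, takes an $\OO(V)$-invariant metric on $P'$, and restricts, whereas you use the Kaluza--Klein construction from a principal connection, a bi-invariant metric on $\OO(\F)$, and a metric on $L$; both are standard and equally valid. You also supply considerably more detail than the paper in verifying the SRF axioms downstairs (the paper simply asserts them), and your horizontal-lift argument for equidistance is sound. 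One small slip worth flagging: in the local trivialization $E|_U\cong U\times V$ coming from a section of $P$, the quotient foliation restricts to $U\times\F$ (the disconnected foliation), not $U\times\F_0$, because the trivialization uses the $\OO(\F)$-action to identify points and thereby merges the $\F_0$-components lying in a common $\F$-leaf; this does not affect your generating-vector-fields argument since $\F$ and $\F_0$ share the same tangent distribution, and you would also want to throw in horizontal fields covering a frame on $L$, but the statement as written is inaccurate.
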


\begin{proof}
The inclusion $\OO(\F)\In \OO(V)$ induces an inclusion $P\In P'$ for some principal $\OO(V)$ bundle over $L$. The bundle $P'$ admits an $\OO(V)$-invariant metric $g_{P'}$, and the restriction $g_P$ of $g_{P'}$ to $P$ is then $\OO(\F^p)$-invariant. The metric $g_P$ then induces a Riemannian metric on $L$ that makes the projection $P\to L$ a Riemannian submersion. Moreover, the foliation $P\times \F_0$ is a singular Riemannian foliation on $P\times V$ with respect to the product metric $g_P+g_V$, $\OO(\F)$ acts on $P\times V$ by foliated isometries, and the metric projection of $P\times V\to P\times\{0\}=P$, which coincides with the projection on the second factor, is an $\OO(\F)$-equivariant Riemannian submersion.

Taking the quotient by the free $\OO(\F)$ action, there is then an induced metric on  $P\times_{\OO(\F)} V$ such that  $P\times_{\OO(\F)} \F_0$ is a singular Riemannian foliation, and the induced metric projection $P\times_{\OO(\F)} V\to P\times_{\OO(\F)}\{0\}=L$ is a Riemannian submersion.
\end{proof}

\begin{remark}
Given a 4-tuple $(L,V,\F,P)$, by Proposition \ref{P:converse} we can produce a singular Riemannian foliation  $(P\times_{\OO(\F)}V, P\times_{\OO(\F)}\F_0)$ which, by Theorem \ref{MT:slice}, induces a 4 tuple  $(\hat{L},\hat{V},\hat{\F},\hat{P})$. It is not hard to prove that the two 4-tuples are in fact isomorphic, in the sense that there exists a triple $(\phi_L, \phi_V, \phi_P)$ such that $\phi_L:L\to \hat{L}$ is a diffeomorphism, $\phi_V:(V,\F)\to (\hat{V},\hat{\F})$ is a foliated isometry inducing an isomorphism $(\phi_V)_*:\OO(\F)\to \OO(\hat\F)$, and $\phi_P:P\to \hat{P}$ is a $(\phi_V)_*$-equivariant diffeomorphism over $\phi_L$.
\end{remark}

\section{Smooth basic functions}
\label{S:basicfunctions}
The goal of this section is to prove Theorems \ref{MT:sphere} and \ref{MT:manifold}.

Subsection \ref{SS:spheres} concerns Theorem \ref{MT:sphere}. One ingredient in this proof is the continuity of the averaging operator (see Section \ref{S:preliminaries}), which we establish in Lemma \ref{L:averaging}, and which is clear in the homogeneous case. The other ingredient is a result about composite differentiable functions from \cite{BierstoneMilman82} (see also \cite{Tougeron80}).

Subsection \ref{SS:manifolds} presents the proof of Theorem \ref{MT:manifold}. We start with a small digression (Proposition \ref{P:virtuallyabelian}) which serves to point out that the strategy used in \cite{Schwarz75} to prove the homogeneous  case of Theorem \ref{MT:manifold} does not apply in the general (inhomogeneous) case. Indeed, a simple application of \cite[Theorem A]{GGR15} shows that the fundamental groups of leaves need to be virtually Abelian, provided the ambient manifod is compact and simply-connected. Then we prove the existence (Lemma \ref{L:partitionof1})  of smooth partitions of unity for the leaf space, and together with a result from Dimension Theory due to Ostrand, they are used to prove Theorem \ref{MT:manifold}.

\subsection{Basic functions on round spheres}
\label{SS:spheres}
One key fact needed in the proof of the Algebraicity Theorem (see Theorem \ref{T:algebraicity}, or \cite[Theorem 1.1]{LytchakRadeschi15}) is that the averaging operator  takes smooth functions to smooth functions. This is proved via a bootstrapping argument involving elliptic regularity. Using these same tools we prove the following stronger statement, which is needed  in the proof of Theorem \ref{MT:sphere}:
\begin{lemma}[Continuity of averaging]
\label{L:averaging}
Let $(V,\F)$ be an infinitesimal \srf\ with compact leaves. Then the averaging operator $\mathrm{Av}:C^\infty(V)\to C^\infty(V)^\F$ is continuous with respect to the $C^\infty$-topology.
\end{lemma}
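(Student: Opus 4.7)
The plan is to deduce continuity from the closed graph theorem, thereby sidestepping any need for direct quantitative control of how the leaf measures $\mu_{L_x}$ vary with the base point $x$. Equipped with its usual topology of uniform convergence of all derivatives on compact sets, $C^\infty(V)$ is a Fr\'echet space, and the subspace $C^\infty(V)^\F$ is closed in it: being constant on each leaf is a pointwise condition, and $C^\infty$-convergence is stronger than pointwise convergence. Hence $C^\infty(V)^\F$ is itself Fr\'echet, and the closed graph theorem applies to any linear map between these two spaces.

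The one substantive analytic prerequisite is that $\mathrm{Av}:C^\infty(V)\to C^\infty(V)^\F$ is well-defined at all, i.e.\ that averaging preserves $C^\infty$. This is exactly the ``same tools'' statement alluded to at the opening of this subsection and established in \cite{LytchakRadeschi15} via an elliptic bootstrap; I would take it as given.

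With these pieces in hand, continuity reduces to verifying that the graph of $\mathrm{Av}$ is closed. Suppose $f_n\to f$ in $C^\infty(V)$ and $\mathrm{Av}(f_n)\to g$ in $C^\infty(V)^\F$; the task is to check $g=\mathrm{Av}(f)$. Fix an arbitrary $x\in V$. Since $\F$ has compact leaves, $L_x$ is a compact subset of $V$, and $C^\infty$-convergence on $V$ specializes to uniform convergence $f_n\to f$ on $L_x$. Integrating against the normalized Riemannian probability measure on $L_x$ yields $\mathrm{Av}(f_n)(x)\to\mathrm{Av}(f)(x)$. At the same time, $C^\infty$-convergence $\mathrm{Av}(f_n)\to g$ forces pointwise convergence $\mathrm{Av}(f_n)(x)\to g(x)$. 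Therefore $g(x)=\mathrm{Av}(f)(x)$ for every $x$, the graph is closed, and $\mathrm{Av}$ is continuous.

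The real obstacle is not the argument above, which is essentially formal, but the underlying smoothness-preservation result imported from \cite{LytchakRadeschi15}. Once that is invoked, the closed graph theorem trivializes the seminorm-estimate question, which otherwise would require one to track carefully how the leaf volumes and leaf measures vary with $x$, particularly across strata of different dimensions.
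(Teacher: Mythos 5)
Your proof is correct, and it takes a genuinely different route from the paper's. The paper re-runs the elliptic-bootstrap machinery quantitatively: Sobolev embedding reduces the problem to $H^{2m}$-estimates, which are then established by induction on $m$ using the base $L^2$ bound and elliptic regularity together with $\Delta\circ\mathrm{Av}=\mathrm{Av}\circ\Delta$. You instead observe that once the well-definedness of $\mathrm{Av}:C^\infty(V)\to C^\infty(V)^\F$ is granted (as it is, from \cite{LytchakRadeschi15}), continuity follows formally from the closed graph theorem for Fr\'echet spaces: $C^\infty(V)^\F$ is a closed subspace (hence Fr\'echet), and closedness of the graph is verified pointwise by integrating the locally uniform convergence $f_n\to f$ against the fixed normalized leaf measure on each compact leaf $L_x$. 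This is a clean and complete argument. What the paper's approach buys is self-containedness: the proof goes through the same estimates used to prove well-definedness, so the lemma does not rest on an external soft-analysis theorem, and the same chain of inequalities could be extracted for quantitative purposes. What your approach buys is economy: it avoids redoing the elliptic estimates entirely, replacing them with a one-paragraph formal argument, and makes it transparent that continuity is not an independent analytic fact but a corollary of well-definedness plus completeness of the function spaces involved.
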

\begin{proof}
Recall that if $U\In V$ is an open set whose closure $\bar{U}$ is compact, and $m$ a nonnegative integer, the Sobolev Embedding Theorem (see \cite{Evans}, \S 5.6.3, Theorem 6) implies that the identity map is a continuous inclusion
$$ H^{2m}(U)\to C^{2m-[n/2]-1}(\bar{U})$$
where $H^{2m}(U)=W^{2m,2}(U)$ denotes the Sobolev space of functions on $U$ with square-integrable derivatives up to order $2m$.
Therefore it is enough to show that for every $m$, $U$, and  sequence $\{f_i\}$ of smooth functions on $V$ which converge to zero in the $C^\infty$-topology,  the sequence $\{\mathrm{Av}(f_i)|_U \}$ converges to zero in $H^{2m}(U)$.

We use induction on $m$. For $m=0$, the conclusion follows from the estimate
$$ \|\mathrm{Av}(f_i) \|_{L^2(U)}^2
\leq
\mathrm{vol}(U)( \|\mathrm{Av}(f_i)\|_{C^0(\bar{U})})^2 
\leq
\mathrm{vol}(U)( \|f_i\|_{C^0(\F\cdot\bar{U})})^2 
$$
where $\F\cdot\bar{U}\In V$ denotes the (compact) union of all leaves that meet $\bar{U}$.

Now take $m\geq 1$. We use the fact that averaging commutes with the Laplace operator \cite{LytchakRadeschi15}: $\Delta(\mathrm{Av}(f_i))=\mathrm{Av}(\Delta f_i)$. By elliptic regularity (see \cite{Evans}, \S 6.3.1, Theorem 2), for any relatively compact $U'$ containing $\bar{U}$, we have:
$$ \| \mathrm{Av} (f_i)\|_{H^{2m}(U)}
\leq
C\left( \| \mathrm{Av}(\Delta f_i)\|_{H^{2m-2}(U')} + \|\mathrm{Av}(f_i) \|_{L^2(U')} \right)$$
where $C$ is a constant depending only on $U$ and $U'$.

Applying the inductive hypothesis to $m-1$, $U'$ and the sequence $\{\Delta f_i\}$, we conclude that
$$\| \mathrm{Av}(\Delta f_i)\|_{H^{2m-2}(U')}\to 0.$$
Since $\| \mathrm{Av}(f_i) \|_{L^2(U')}\to 0$ as in the base case, we have  
$\| \mathrm{Av}(f_i)\|_{H^{2m}(U)}\to 0$,
thus finishing the proof.
\end{proof}

\begin{proof}[Proof of Theorem \ref{MT:sphere}]
Since $\R[V]$ is dense in $C^\infty(V)$ (in the $C^\infty$ topology), it follows from Lemma \ref{L:averaging} that $\R[V]^\F$ is dense in $C^\infty(V)^\F$. In particular, $\rho^*(C^\infty(\R^k))\supset\R[V]^\F$ is dense in $C^\infty(V)^\F$. It remains to argue that $\rho^*(C^\infty(\R^k))$ is closed in $C^\infty(V)$.

Since $\rho$ is a polynomial map, its image is semi-algebraic by the Tarski-Seidenberg Theorem, and in particular Nash subanalytic. The map $\rho$ is also proper, because $x\in V \mapsto \|x\|^2$ is a basic polynomial. Therefore we may apply \cite[Theorem 0.2]{BierstoneMilman82}  to conclude that $\rho^*(C^\infty(\R^k))$ is closed in $C^\infty(V)$.
\end{proof}

\subsection{Basic functions on manifolds}
\label{SS:manifolds}
Let $(M,\F)$ be a \srf\  on the complete manifold $M$, and assume the leaves are closed. As usual we say a  function $f$ defined on $M/\F$ (or on an open subset) is smooth if $f\circ\pi$ is smooth, where $\pi:M\to M/\F$ is the natural projection onto the leaf space.

Note that if $M$ is compact, Theorem \ref{MT:sphere} and  Corollary \ref{C:slice} imply that $C^\infty(M)^\F$ is generated, as a $C^\infty$-algebra, by a finite number of basic functions. Theorem \ref{MT:manifold} asserts that the same conclusion holds under the weaker assumption that the  disconnected slice foliations $\F^p$  of $(M,\F)$ fall into a finite set of isomorphism types (see subsections \ref{SS:disconnected} and \ref{SS:localmodels}). Here by an isomorphism between disconnected slice foliations $(V_p,\F^p)$ and $(V_q,\F^q)$ we mean a linear isometry $V_p\to V_q$ which takes leaves of $\F^p$ to leaves of $\F^q$. 

The proof of Theorem \ref{MT:manifold} given below differs from the analogous result in \cite{Schwarz75}: we use Ostrand's Theorem (Theorem 1 in \cite{Ostrand65}) instead of Palais' Theorem about equivariant embeddings into Euclidean space \cite{Palais57}. In fact, Palais' Theorem cannot generalize to \srf s, because not every manifold $L$ is a leaf of some infinitesimal \srf\ $(V,\F)$. Indeed, such a leaf must be contained in a sphere, and we may apply the following consequence of \cite[Theorem A]{GGR15}:
\begin{proposition}[Virtually Abelian]
\label{P:virtuallyabelian}
Let $M$ be a compact, simply-connected Riemannian manifold; $\F$ a \srf\ of $M$ with closed leaves; and $L$ a leaf. Then the fundamental group $\pi_1(L)$ is virtually Abelian.
\end{proposition}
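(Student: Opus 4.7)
The plan is to apply \cite[Theorem A]{GGR15} essentially as a black box. That result constrains the topology of leaves of a singular Riemannian foliation with closed leaves on a compact, simply-connected Riemannian manifold; the relevant consequence here is that every closed leaf $L$ admits a finite-sheeted cover $\tilde L\to L$ whose fundamental group is abelian, or more generally a lattice in a simply-connected nilpotent Lie group (which is itself virtually Abelian by Mal'cev's theorem).

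Granting this input, the proof is a one-line covering-space argument: the induced injection $\pi_1(\tilde L)\hookrightarrow \pi_1(L)$ has image of finite index equal to the degree of the cover, so $\pi_1(L)$ contains a virtually Abelian subgroup of finite index and is therefore itself virtually Abelian. Concretely the steps are (i) apply \cite[Theorem A]{GGR15} to $(M,\F,L)$; (ii) extract the virtually Abelian cover $\tilde L\to L$; (iii) conclude by the standard observation that ``virtually Abelian'' is stable under passing to a supergroup of finite index.

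The main obstacle is not the logical chain above, which is routine, but rather unpacking exactly what \cite[Theorem A]{GGR15} gives us in this precise setting and verifying that its hypotheses (compact, simply-connected ambient manifold, closed leaves) match ours verbatim. Once that translation is in hand, no further machinery from the present paper --- neither the Slice Theorem \ref{MT:slice}, nor the Algebraicity Lemma \ref{L:algebraicity}, nor anything about basic functions --- is required. In particular, the simple-connectedness of $M$ enters only through Theorem A and cannot be dropped: for instance, taking $M=\Sigma_g\times S^n$ with $\Sigma_g$ a surface of genus $g\geq 2$, foliated by the leaves $\Sigma_g\times\{\mathrm{pt}\}$, gives an ambient $M$ with $\pi_1(M)=\pi_1(\Sigma_g)$ non-virtually-Abelian and with leaves whose fundamental groups realise this surface group, showing that the simply-connected hypothesis is genuinely necessary.
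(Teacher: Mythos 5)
Your plan rests on the assumption that \cite[Theorem A]{GGR15} applies directly to an \emph{arbitrary} closed leaf $L$ and gives a virtually Abelian $\pi_1(L)$ (possibly after passing to a finite cover). That is not what the paper does, and the discrepancy is the sign of a real gap. The paper applies Theorem A from \cite{GGR15} only to a nearby \emph{regular} leaf $L'$; to descend the conclusion to a possibly singular leaf $L$ it invokes the Slice Theorem (Theorem~\ref{MT:slice}): the metric projection $L'\to L$ is a fiber bundle with compact fiber $L''=L'\cap V_p$, so the tail of the homotopy exact sequence
\begin{equation*}
\pi_1(L')\to\pi_1(L)\to\pi_0(L'')\to\pi_0(L')=0
\end{equation*}
exhibits $\pi_1(L)$ as an extension of the finite group $\pi_0(L'')$ by a quotient of the virtually Abelian group $\pi_1(L')$, and hence virtually Abelian. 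Your assertion that ``no further machinery from the present paper --- neither the Slice Theorem \ref{MT:slice}, \ldots --- is required'' is therefore wrong: the Slice Theorem is exactly the missing ingredient, and without it you have no way to transport the information from regular to singular leaves.

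There is a second, smaller error in the parenthetical of your step (ii): lattices in simply-connected nilpotent Lie groups are \emph{not} in general virtually Abelian (the integer Heisenberg group is the standard counterexample), and Mal'cev's theorem does not say otherwise. If Theorem A only delivered ``virtually nilpotent'' for the regular leaf, the stated conclusion would not follow. Your closing step (iii), that virtual Abelianity passes from a finite-index subgroup to the ambient group, is correct but is not where the difficulty lies.
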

\begin{proof}
Let $L'$ be a nearby regular leaf, and fix some $p\in L$, with slice $V_p=\nu_pL$. By Theorem \ref{MT:slice}, the metric projection $L'\to L$ is a fiber bundle with fiber $L''=L'\cap V_p$. Since $L''$ is compact, the set $\pi_0(L'')$ is finite. Since $\F$ has closed leaves, Theorem A from \cite{GGR15} implies that $\pi_1(L')$ is virtually Abelian. From the long exact sequence of homotopy groups
\begin{equation*}
\ldots\to\pi_1(L')\to\pi_1(L)\to\pi_0(L'')\to\pi_0(L')=0
\end{equation*}
it follows that $\pi_1(L)$ is also virtually Abelian.
\end{proof}

We will need the following lemma in the proof of Theorem \ref{MT:manifold}. Recall that a family of subsets of a topological space is called discrete if every point has a neighbourhood which intersects at most one of the subsets in the family.

\begin{lemma}[Partition of unity]
\label{L:partitionof1}
Let $(M,\F)$ be a \srf\  on the complete manifold $M$, and assume the leaves are closed. 
\begin{enumerate}[a)]
\item There is a partition of unity by smooth functions subordinate to any open covering of the leaf space $M/\F$.
\item Let $\mathcal{U}=\{W_1, W_2, \ldots\}$ be a discrete family of open sets in $M/\F$. Then there exists a smooth function $f$ on $M/\F$ such that $f|_{W_i}$ has constant value $i$.
\end{enumerate}
\end{lemma}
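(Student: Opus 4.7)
My plan is to treat this as a pair of partition-of-unity results on the paracompact Hausdorff metric space $M/\F$, with smooth basic bump functions supplied by Proposition \ref{P:tube}. Since $\F$ has closed leaves, the quotient distance $d(\pi(x),\pi(y)):=\inf\{d(x',y'):x'\in L_x,\ y'\in L_y\}$ defines a genuine metric on $M/\F$ inducing the quotient topology, so that $M/\F$ is paracompact and collectionwise normal (every discrete family of closed sets can be separated by a discrete family of open sets).

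The common building block for both parts is the construction of smooth basic bump functions. Given a leaf $L$ and an open neighbourhood of $\pi(L)$ in $M/\F$, Proposition \ref{P:tube} yields $\epsilon>0$ such that the normal exponential map is a diffeomorphism from $\nu^\epsilon L$ onto the $\epsilon$-tube $U$, and $\epsilon$ can be shrunk so that $\pi(U)$ lies inside the given neighbourhood. On $U$ the squared distance $d(\cdot,L)^2$ pulls back under $\exp^\perp$ to $\|v\|^2$ and is therefore smooth; it is also basic by the local equidistance of leaves in a \srf. Composing with a smooth cutoff $\phi:\R\to[0,1]$ that is identically $1$ near $0$ and supported strictly inside $[0,\epsilon^2)$, and extending by zero outside $U$, yields a smooth basic function on $M$ equal to $1$ on a smaller tube around $L$ and with support contained in $U$, hence descending to a bump function on $M/\F$ of the desired sort. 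For part (a), a standard paracompact-space argument then takes over: given any open cover of $M/\F$, cover it by tubes of the above form subordinate to the given cover, pass to a locally finite refinement and a shrinking, produce nonnegative smooth basic functions $\psi_j$ equal to $1$ on the shrunken elements and supported in the original tubes, and normalize by the (locally finite, strictly positive) sum to obtain the desired partition of unity.

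For part (b), discreteness of $\{W_i\}$ forces the family of closed sets $\{\overline{W_i}\}$ to be pairwise disjoint (each $\overline{W_i}$ lives in any neighbourhood meeting only $W_i$), and collectionwise normality of the metric space $M/\F$ provides a discrete family of open sets $\{V_i\}$ with $\overline{W_i}\In V_i$. The bump function construction applied to each pair $(\overline{W_i},V_i)$ produces smooth basic functions $\chi_i:M/\F\to[0,1]$ with $\chi_i\equiv 1$ on $\overline{W_i}$ and $\mathrm{supp}(\chi_i)\In V_i$. Because $\{V_i\}$ is discrete, each point of $M/\F$ has a neighbourhood on which at most one $\chi_i$ is nonzero, so $f:=\sum_i i\,\chi_i$ is a well-defined smooth basic function, and by construction $f|_{W_i}=i$.

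The only point requiring genuine care, and thus the main obstacle, is verifying that $d(\cdot,L)^2$ is smooth \emph{and basic} on the tube. Smoothness is immediate from Proposition \ref{P:tube} and the normal exponential coordinates; that it is basic amounts to the fact that nearby leaves are equidistant from $L$, which is part of the definition of a \srf\ and simultaneously underlies the fact that the quotient distance above is a well-defined metric on $M/\F$. Once this is in place, the rest reduces to standard paracompactness and collectionwise-normality manipulations on metric spaces.
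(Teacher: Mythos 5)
Your proposal is correct, and your treatment of part (a) is essentially the paper's: build smooth basic bump functions by composing the squared distance to a leaf (smooth on a tube by Proposition~\ref{P:tube}, basic by equidistance) with cutoffs, then run the standard paracompactness machinery on the metrizable quotient $M/\F$.

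For part (b) you take a genuinely different route. The paper bypasses collectionwise normality entirely: from the definition of discreteness it extracts an open cover $\{\mathcal{U}_j\}$ in which each $\mathcal{U}_j$ meets at most one $W_{c(j)}$ (setting $c(j)=0$ otherwise), invokes part (a) to get a subordinate partition of unity $\{\phi_j\}$, and sets $f=\sum_j c(j)\phi_j$; for $x\in W_i$ every nonzero term forces $c(j)=i$, so $f\equiv i$ on $W_i$. You instead upgrade the discrete family $\{\overline{W_i}\}$ to a discrete open separating family $\{V_i\}$ via collectionwise normality of the metric space $M/\F$, build smooth Urysohn functions $\chi_i$ for each pair $(\overline{W_i},V_i)$, and form the locally one-term sum $f=\sum_i i\,\chi_i$. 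This is valid, but note that producing each $\chi_i$ (equal to $1$ on the whole closed set $\overline{W_i}$, not just near a single leaf) already requires the full partition-of-unity machinery of part (a), not merely the single-leaf bump construction you describe as the ``building block''; so you are implicitly re-running part (a) per index $i$. The paper's version is more economical in that it applies part (a) exactly once, to one cover, and needs no collectionwise normality. Both approaches are correct, and yours has the minor virtue of making explicit that $M/\F$ is a metric space (hence paracompact and collectionwise normal), a point the paper leaves tacit.
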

\begin{proof}
\begin{enumerate}[a)]
\item
By Proposition \ref{P:tube}, for each $x\in M/\F$, there is $\epsilon>0$ such that $d(x,\cdot)^2$ is smooth on the ball of radius $\epsilon$ around $x$. Composing this function with appropriate smooth functions on $\R$ yields ``bump'' functions, and hence partitions of unity subordinate to any open cover.
\item
By the definition of discrete family, there is an open cover $\{\mathcal{U}_j\}_{j\in J}$ of $M/\F$ with the property that for every $j\in J$, either there is a unique index $c(j)$ such that $W_{c(j)}\cap\mathcal{U}_j\neq \emptyset$, or $\mathcal{U}_j$ does not intersect any $W_i$. In the latter case, set $c(j)=0$. By part a), there is a partition of unity $\{\phi_j\}$ subordinate to $\{\mathcal{U}_j\}$. Then the smooth function $f=\sum_{j\in J}c(j)\phi_j$ clearly satisfies $f|_{W_i}\equiv i$.
\end{enumerate}
\end{proof}

\begin{proof}[Proof of Theorem \ref{MT:manifold}]
Consider the  cover $\mathcal{C}$ of the leaf space $M/\F$ consisting of the open balls $B(x,r(x))$, for all $x\in M/\F$, where $r(x)$ denotes the focal radius of $\pi^{-1}(x)$ inside $M$. It follows from Proposition \ref{P:tube} that $r(x)>0$. By Theorem 1 in \cite{Ostrand65}, there are at most $1+\dim(M/\F)$ discrete families of open sets whose union covers $M/\F$ and refines $\mathcal{C}$. 

Let $l$ be the number of distinct isomorphism types of slice foliations of $\F$. By partitioning the discrete families produced by Ostrand's Theorem, we obtain discrete families $\mathcal{U}_1, \ldots \mathcal{U}_m$, and disconnected infinitesimal foliations of Euclidean spaces $(V_1,\F_1), \ldots, (V_m,\F_m)$, where $m\leq l(1+\dim(M/\F))$, with the following property: Every open set in $\mathcal{U}_i$ is contained in some ball $B(x,r(x))$ such that the slice foliation of the leaf $\pi^{-1}(x)$ is isomorphic to $(V_i,\F_i)$.

Using a partition of unity of $M/\F$ by smooth functions subordinate to $\{U_i\}$, where  $U_i=\cup_{W\in \mathcal{U}_i}W$, it is enough to show that each $C^\infty(U_i)^\F$ is finitely generated as a $C^\infty$-algebra.

Fix $i$, let $\mathcal{U}_i=\{W_1, W_2, \ldots\}$, so that $U_i=\cup_jW_j$, and $f\in C^\infty(W)^{\F}$ such that $f|_{W_j}\equiv i$. Such $f$ exists by Lemma \ref{L:partitionof1}. Choose generators $\rho_1, \ldots, \rho_k$ for $\R[V_i]^ {\F_i}$.

By Theorem \ref{MT:sphere} and Lemma \ref{L:discosmooth}, $\rho_1, \ldots \rho_k$ generate $C^\infty(V_i)^{\F_i}$, which by Corollary \ref{C:slice} is isomorphic to $C^\infty(W_j)^\F$ for every $j$. Let $\tilde{\rho}_1, \ldots, \tilde{\rho}_k\in C^\infty(U_i)^{\F}$ such that $\tilde{\rho}_m|_{W_j}$ corresponds to $\rho_m$ under this isomorphism.

We claim $\{f,\tilde{\rho}_1, \ldots \tilde{\rho}_k\}$ generate $C^\infty(U_i)^\F$ as a $C^\infty$-algebra. Indeed, given $g\in C^\infty(U_i)^\F$, there are $Q_j\in C^\infty(\R^k)$ such that $g|_{W_j}=Q_j(\tilde{\rho}_1,\ldots \tilde{\rho}_k)$, and hence we may find $Q\in C^\infty(\R^{k+1})$ such that $Q|_{\{j\}\times\R^k}=Q_j$ for $j=1,2,\ldots$, and therefore $g=Q(f,\tilde{\rho}_1,\ldots\tilde{\rho}_k)$.

\end{proof}

Note that, using the notation in the proof above, if $k$ denotes the maximum number of generators for $\R[V_i]^{\F_i}$ over $i=1,\ldots m$, then the total number of generators for $C^\infty(M)^\F$ is bounded above by $l(1+\dim(M/\F))(1+k)$.

\section{Smooth maps between leaf spaces}
\label{S:maps}
The goal of the section is to establish an Inverse Function Theorem for leaf spaces (Theorem \ref{MT:inverse}). We follow closely the results in \cite{Schwarz1980} leading to his Inverse Function Theorem for orbit spaces (Thm. 1.11). Indeed, we start with the definition of tangent spaces and differential of smooth maps, Lemmas \ref{L:tg-space} and  \ref{L:differential}, and in particular relate them to the algebras of basic polynomials in the correponding slices. After using these to prove Theorem \ref{MT:inverse}, we point out (Remark \ref{R:flow}) that the main result in \cite{malex-radeschi-flows-isometries} can be generalized from homogeneous to inhomogeneous \srf s using Theorem \ref{MT:inverse}. In other words, every flow by isometries of the leaf space is smooth.

\subsection{Tangent spaces and differentials}
\label{SS:tangent}
Let $(M,\F)$ be a singular Riemannian foliation with closed leaves, and let $M/\F$ denote the leaf space.

Recall that a map $f:M_1/\F_1\to M_2/\F_2$ between leaf spaces is called \emph{smooth} if $f^*C^\infty(M_2/\F_2)\In C^\infty(M_1/\F_1)$, and it is a \emph{diffeomorphism} is there exists a smooth inverse $f^{-1}:M_1/\F_1\to M_2/\F_2$.

Given a point $x\in M/\F$, let $\mathcal{M}_x$, (or sometimes $\mathcal{M}_x(M/\F)$ if there is a risk of confusion) denote the ideal of germs of smooth functions in $C^\infty(M/\F)$ which vanish at $x$, and define the \emph{tangent space of $M/\F$ at $x$} to be
\[
T_x(M/\F)=(\mathcal{M}_x/\mathcal{M}_x^2)^*,
\]
where $\mathcal{M}_x^2$ denotes the ideal generated by products of pairs of elements in $\mathcal{M}_x$, and  $(\mathcal{M}_x/\mathcal{M}_x^2)^*$ denotes the dual of $\mathcal{M}_x/\mathcal{M}_x^2$. Moreover, given a smooth map $f:M_1/\F_1\to M_2/\F_2$ and a point $x\in M_1/\F_1$, there is an induced map $f^*:\mathcal{M}_{f(x)}\to\mathcal{M}_{x}$ such that $f^*(\mathcal{M}_{f(x)}^2)\In \mathcal{M}_{x}^2$, and therefore there is an induced linear map
\[
d_xf:T_xM_1/\F_1=(\mathcal{M}_x/\mathcal{M}_x^2)^*\to (\mathcal{M}_{f(x)}/\mathcal{M}_{f(x)}^2)^*=T_{f(x)}M_2/\F_2
\]
which we call the \emph{differential of $f$ at $x$}.

If $(M_i,\F_i)$, $i=1,2$, are foliated by points, it is clear that the definitions above coincide with the usual definitions of tangent space and differential.
\begin{lemma}\label{L:tg-space}
Let $(V,\F)$ be a (possibly) disconnected infinitesimal foliation, let $\{\psi_1,\ldots, \psi_l\}$ denote a minimal set of generators for the algebra of basic polynomials, and let $ \hat{\psi}:V/\F\to \R^l$ denote the map induced by $\psi=(\psi_1,\ldots \psi_l):V\to \R^l$. Then
\[
d_0 \hat{\psi}:T_0V/\F\to T_0\R^l\simeq \R^l
\]
is an isomorphism.
\end{lemma}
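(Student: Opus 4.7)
The plan is to show that the pullback $\hat\psi^*:\mathcal{M}_0(\R^l)\to\mathcal{M}_0(V/\F)$ descends to a linear isomorphism on the quotients $\mathcal{M}/\mathcal{M}^2$; dualizing then gives the claim. Throughout I identify germs at $0\in V/\F$ with germs of smooth basic functions at $0\in V$, which is legitimate since leaves of an infinitesimal foliation lie on round spheres and thus every open ball around $0$ is $\F$-saturated.

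First I would check that $\hat\psi^*$ is already surjective at the level of $\mathcal{M}_0$. Given a germ represented by a smooth basic function $f$ with $f(0)=0$, I multiply a representative by a basic bump of the form $\chi(\|x\|^2)$ to extend $f$ to a globally defined smooth basic function on $V$ with unchanged germ at $0$. Theorem \ref{MT:sphere} applied to the (connected) foliation $\F_0$, together with Lemma \ref{L:discosmooth}, then produces $g\in C^\infty(\R^l)$ with $f=g\circ\psi$ globally, and necessarily $g(0)=0$. Surjectivity of $\hat\psi^*$ on $\mathcal{M}_0$ passes immediately to the quotient, so $d_0\hat\psi$ is injective.

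For the reverse direction, injectivity of $\hat\psi^*$ on $\mathcal{M}_0/\mathcal{M}_0^2$ (equivalently surjectivity of $d_0\hat\psi$), I would take $g\in\mathcal{M}_0(\R^l)$ with $g\circ\psi\in\mathcal{M}_0^2(V/\F)$, Taylor expand $g(y)=\sum_i a_iy_i+r(y)$ with $r\in\mathcal{M}_0^2(\R^l)$, and reduce to showing that $\sum_i a_i\psi_i\in\mathcal{M}_0^2(V/\F)$ forces all $a_i=0$. Here I invoke the Taylor argument from step (b) of the proof of Theorem \ref{MT:sphere}: by the Homothetic Transformation Lemma, each homogeneous component at $0$ of the Taylor series of a smooth basic function is itself a basic polynomial. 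Applied to any element of $\mathcal{M}_0^2(V/\F)$, written as a finite sum of products of pairs of germs in $\mathcal{M}_0(V/\F)$, this shows that the homogeneous components of the Taylor series lie in $I^2$ degree by degree, where $I\In A:=\R[V]^\F$ is the ideal of positive-degree basic polynomials. Matching degree-$k$ parts then yields $\sum_{d_i=k}a_i\psi_i\in I^2\cap A_k$ for every $k$.

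To finish, I would use that minimality of $\{\psi_1,\ldots,\psi_l\}$ as a homogeneous generating set of $A$ is equivalent to linear independence of the classes $\bar\psi_i$ in $I/I^2$: a non-trivial homogeneous relation modulo $I^2$ would let one solve for a $\psi_j$ of maximal degree appearing as a polynomial in the remaining generators, contradicting minimality (the standard graded Nakayama argument). Hence $a_i=0$ for every $i$. The step I expect to require the most care is the Taylor series analysis of $\mathcal{M}_0^2(V/\F)$: one must verify that products of germs vanishing at $0$ pull back to elements whose homogeneous components genuinely land in $I^2$ and not merely in $I$, which is exactly where the grading supplied by the Homothetic Transformation Lemma is indispensable.
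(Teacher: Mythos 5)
Your proof is correct and follows essentially the same route as the paper: reduce the statement to showing $\hat\psi^*$ is an isomorphism on $\mathcal{M}_0/\mathcal{M}_0^2$, get surjectivity from Theorem~\ref{MT:sphere} together with Lemma~\ref{L:discosmooth}, and get injectivity from a Taylor-series argument at the origin combined with minimality of the generating set. The paper phrases the injectivity step more tersely (solving directly for a $\psi_j$), while you make the graded-Nakayama formulation explicit and verify that homogeneous components of elements of $\mathcal{M}_0^2$ land in $I^2$; this is a welcome elaboration of the same idea rather than a different argument. One small remark: since $\mathcal{M}_0(V/\F)$ is by definition the ideal of germs of \emph{globally defined} functions in $C^\infty(V/\F)$, your bump-function extension step is unnecessary, though harmless.
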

\begin{proof}
Let $x_1,\ldots,x_l$ denote the standard coordinate functions on $\R^l$. It is enough to prove that
$$ \hat{\psi}^*:\mathcal{M}_0(\R^l)/\mathcal{M}_0^2(\R^l)\to \mathcal{M}_0(V/\F)/\mathcal{M}^2_0(V/\F),\quad [x_i]\to [\psi_i]$$
is an isomorphism. By Theorem \ref{MT:sphere}, $ \hat{\psi}^*\mathcal{M}_0(\R^l)=\mathcal{M}_0(V/\F)$ and this shows surjectivity. On the other hand, $ \hat{\psi}^*$ must be injective otherwise there would be a linear combination $\sum a_i\psi_i=\phi \in \mathcal{M}_0^2(V/\F)$. Without loss of generality, suppose that $a_1\neq 0$. By taking the Taylor series of $\sum a_i\psi_i=\phi$ at $0\in V$, and restricting it to the homogeneous terms of degree $\deg\psi_1$, we would have that $\psi_1$ can be written as a polynomial in $\psi_2,\ldots \psi_l$, which would contradict the minimality of $\psi_1,\ldots, \psi_l$.
\end{proof}

Given a point $x\in M/\F$ and $p\in M$ mapping to $x$ via the projection $M\to M/\F$, by Theorem $C$  a neighbourhood of $x\in M/\F$ is diffeomorphic to $V_p/\F^p$, where $(V_p,\F^p)$ is the disconnected slice foliation at $p$. In particular $T_xM/\F\simeq T_0V_p/\F^p=\R^l$, where $l$ denotes the number of generators of the ring of basic polynomials for $(V_p,\F^p)$.
\begin{lemma}\label{L:differential}
Let $(V_i,\F_i)$, $i=1,2$, be  disconnected infinitesimal foliations, with minimal sets of generators $(\psi_1^i,\ldots \psi_{l_i}^i)$ and induced maps $ \hat{\psi}^i:V_i/\F_i\to \R^{l_i}$. If $f:V_1/\F_1\to V_2/\F_2$ is a smooth map, then there is a smooth map $\phi:\R^{l_1}\to \R^{l_2}$ such that the following diagram commutes:
\begin{equation}\label{E:square-differential}
\begin{CD}
V_1/\F_1    @>f>>  V_2/\F_2 \\
@VV\hat{\psi}^1 V        @VV\hat{\psi}^2V \\
\R^{l_1}    @>\phi>>  \R^{l_2} 
\end{CD}
\end{equation}
\end{lemma}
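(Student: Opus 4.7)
The plan is to construct $\phi$ one component at a time, by invoking Theorem \ref{MT:sphere} (together with Lemma \ref{L:discosmooth}, which handles the disconnected case) on $V_1$.

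More precisely, I would first observe that each coordinate function $\hat{\psi}^2_j:V_2/\F_2\to\R$ is smooth, since it corresponds to the basic polynomial $\psi_j^2$ on $V_2$. Because $f:V_1/\F_1\to V_2/\F_2$ is smooth, the pullback $\hat{\psi}^2_j\circ f$ is a smooth function on $V_1/\F_1$, i.e.\ a smooth basic function on $V_1$ for the disconnected foliation $\F_1$. Next I would apply Lemma \ref{L:discosmooth}: since Theorem \ref{MT:sphere} gives $C^\infty(V_1)^{(\F_1)_0}=(\rho^1)^*C^\infty(\R^k)$ for a set of basic polynomial generators of the connected component foliation, the implication in Lemma \ref{L:discosmooth} yields
\[
C^\infty(V_1)^{\F_1}=(\hat{\psi}^1)^*C^\infty(\R^{l_1}).
\]
Thus for each $j=1,\ldots,l_2$ there exists $\phi_j\in C^\infty(\R^{l_1})$ with $\hat{\psi}^2_j\circ f=\phi_j\circ\hat{\psi}^1$.

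Finally, setting $\phi=(\phi_1,\ldots,\phi_{l_2}):\R^{l_1}\to\R^{l_2}$ gives a smooth map satisfying $\hat{\psi}^2\circ f=\phi\circ\hat{\psi}^1$, which is exactly the commutativity of diagram \eqref{E:square-differential}. There is essentially no obstacle here: the content of the lemma is just the statement that Theorem \ref{MT:sphere}, applied componentwise on the target, lets one lift a smooth map of leaf spaces to a smooth map between the ambient Euclidean quotient targets. Note also that $\phi$ is not unique — only its restriction to $\hat{\psi}^1(V_1/\F_1)$ is determined by $f$ — but the statement only asserts existence of some such $\phi$.
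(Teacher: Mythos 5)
Your proof is correct and is essentially identical to the paper's: both apply Theorem~\ref{MT:sphere} (via Lemma~\ref{L:discosmooth} for the disconnected case) componentwise to $\hat{\psi}^2_j\circ f\in C^\infty(V_1/\F_1)$ to produce each $\phi_j$, and then assemble $\phi=(\phi_1,\ldots,\phi_{l_2})$. In fact you are slightly more careful than the paper in explicitly citing Lemma~\ref{L:discosmooth} to handle disconnectedness.
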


\begin{proof}
Since $ \hat{\psi}^2\circ f:V_1/\F_1\to \R^{l_2}$ is smooth, in particular $ \hat{\psi}^2_i\circ f\in C^{\infty}(V_1/\F_1)$ for every $i=1,\ldots, l_2$ and, by Theorem B, there exists some $\phi_i\in C^{\infty}(\R^{l_1})$ such that $ \hat{\psi}^2_i\circ f=\phi_i\circ \hat{\psi}^1$. By construction, the function $\phi=(\phi_1,\ldots \phi_{l_2})$ then makes the diagram commute.
\end{proof}

It follows from Lemmas \ref{L:tg-space} and \ref{L:differential} that the differential of a smooth map $f:V_1/\F_1\to V_2/\F_2$ between leaf spaces of infinitesimal foliations can be identified with the differential of a smooth map $\phi:\R^{l_1}\to \R^{l_2}$ in the usual sense.

%

\subsection{Inverse Function Theorem}
\label{SS:inverse}
Finally we turn to the proof of the Inverse Function Theorem, which follows closely the proof of \cite[Theorem 1.11]{Schwarz1980}. Recall that given  a \srf\  $(M,\F)$, $M$ has a natural stratification by dimension of leaves. The top stratum, consisting of leaves of maximal dimension, is the complement of a closed set of codimension at least two, and is therefore open, dense and connected \cite[page 197]{Molino}.
\begin{proof}[Proof of Theorem \ref{MT:inverse}]
By the Slice Theorem, there are neighbourhoods of $x\in M_1/\F_1$ and $f(x)\in M_2/\F_2$ diffeomorphic respectively to leaf spaces $V_1/\F_1$ and $V_2/\F_2$ of (possibly disconnected) infinitesimal foliations. Therefore $f$ restricts to a smooth strata-preserving map $f:V_1/\F_1\to V_2/\F_2$ and, by Lemma \ref{L:differential}, there is an induced smooth map $\phi:\R^{l_1}\to \R^{l_2}$ which makes the diagram in \eqref{E:square-differential} commute. Because now $d_0f$ is an isomorphism, by Lemma \ref{L:tg-space} the differential $d_0\phi$ is an isomorphism as well, and by the standard Inverse Function Theorem $\phi$ is a diffeomorphism near $0\in \R^{l_1}$.

Let $U_i\In \R^{l_i}$, $i=1,2$, be open neighbourhoods of $0$ such that $\phi|_{U_1}:U_1\to U_2$ is a diffeomorphism, and let $S_i\In U_i$ denote the image of $V_i/\F_i$, with $S_i^{pr}$ denoting the image of the principal part and $S_i^s=S_i\setminus S_i^{pr}$ denoting the image of the singular part. Because $f$ is strata-preserving, $f(S_1^{pr})\In S_2^{pr}$ and $f(S_1^{s})\In S_2^{s}$. In particular, $f(S_1^{pr})$ is open and closed in $S_2^{pr}$ and, by choosing $U_2$ so that $S_2^{pr}$ is connected (for example taking $U_2$ a distance ball around the origin) it follows that $f(S_1^{pr})=S_2^{pr}$. Because $S_i$ is the closure of $S_i^{pr}$ in $U_i$, $i=1,2$, it follows that $f(S_1)=S_2$ as well. Therefore, it makes sense to define
\[
f^{-1}=(\hat{\psi}^1)^{-1}\circ (\phi|_{S_1})^{-1}\circ \hat{\psi}^2:V_2/\F_2\to V_1/\F_1
\]
which is smooth because it induces the map $\phi^{-1}$ between the smooth functions.
\end{proof}

\begin{remark}
\label{R:flow}
The Inverse Function Theorem above could be applied for example to prove that, given a complete manifold $M$ and a singular Riemannian foliation $(M,\F)$ with closed leaves, any one-parameter group of isometries
\[
\phi: M/\F\times \R\to M/\F
\]
is smooth. Indeed, this was proved in \cite{malex-radeschi-flows-isometries} in the homogeneous setting. The proof is divided in three major steps (reducing the proof to a simpler situation; proving the smoothness on a codimension-one set of $M/\F\times \R$; and finally extending the smoothness to the whole of $M/\F\times\R$), where the first two steps hold for general singular Riemannian foliations, and the third is a simple application of Schwarz's Inverse function Theorem for orbit spaces \cite[Thm. 1.11]{Schwarz1980}. As Theorem  \ref{MT:inverse} generalizes Schwarz's  Inverse function Theorem to the case of leaf spaces, this could be applied to extend the main result in \cite{malex-radeschi-flows-isometries} to general singular Riemannian foliations with closed leaves.
\end{remark}

\bibliographystyle{alpha}
\bibliography{refsmooth,ref}
\end{document}